
\documentclass[11pt,a4paper]{article}


\usepackage{amsmath,amsfonts,amssymb,amsthm}
\usepackage{mathbbol}
\usepackage{amsthm}
\usepackage{graphicx,color}
\usepackage{boxedminipage}
\usepackage[linesnumbered, boxed, algosection,noline]{algorithm2e}
\usepackage{framed}
\usepackage{thmtools}
\usepackage{thm-restate}
\usepackage{xspace}
\usepackage{vmargin}
\setmarginsrb{1in}{1in}{1in}{1in}{0pt}{0pt}{0pt}{6mm}
\usepackage{todonotes}
 \usepackage[pdftex, plainpages = false, pdfpagelabels, 
                 bookmarks=false,
                 bookmarksopen = true,
                 bookmarksnumbered = true,
                 breaklinks = true,
                 linktocpage,
                 pagebackref,
                 colorlinks = true,  
                 linkcolor = blue,
                 urlcolor  = blue,
                 citecolor = red,
                 anchorcolor = green,
                 hyperindex = true,
                 hyperfigures
                 ]{hyperref} 
 \usepackage{xifthen}
 \usepackage{tabularx}

 \usetikzlibrary{calc} 

\newtheorem{theorem}{Theorem}
\newtheorem{corollary}{Corollary}

\newtheorem{question}{Question}

\newtheorem{lemma}{Lemma}
\newtheorem{claim-n}{Claim}[section]

\DeclareMathOperator*{\argmax}{arg\,max}
\DeclareMathOperator*{\argmin}{arg\,min}
\newcommand{\opt}{\textsc{opt}}
\newcommand{\cost}{\textsc{cost}}

\newcommand{\m}[1]{\mathcal{#1}}
\newcommand{\w}{\omega}

\title{Present-Biased Optimization\thanks{A preliminary version of this paper is accepted for AAAI 2021. 
The research received funding from  the Research Council of Norway via the project ``MULTIVAL" (grant no. 263317) and from the French National Research Agency (ANR) via the project ``DESCARTES''. }}

\author {
    Fedor V. Fomin\thanks{Department of Informatics, University of Bergen, Norway}
    \and
    Pierre Fraigniaud\thanks{IRIF, Universit\'e de Paris and CNRS, France}\addtocounter{footnote}{-2}   
     \and
    Petr A. Golovach\footnotemark{}
}


\begin{document}

\maketitle

\begin{abstract}
This paper explores the behavior of \emph{present-biased} agents, that is, agents who erroneously anticipate the costs of future actions compared to their real costs. Specifically, the paper extends the original framework proposed by Akerlof (1991)  for studying various aspects of human behavior related to time-inconsistent planning, including procrastination, and abandonment, as well as the elegant graph-theoretic model encapsulating this framework recently proposed by Kleinberg and Oren (2014). The benefit of this extension is twofold. First, it enables to perform fine grained analysis of the behavior of present-biased agents depending on the \emph{optimisation task} they have to perform. In particular, we study covering tasks vs. hitting tasks, and show that the ratio between the cost of the solutions computed by present-biased agents and the cost of the optimal solutions may differ significantly depending on the problem constraints. 
Second, our extension enables to study not only underestimation of future costs, coupled with minimization problems, but also all combinations of minimization/maximization, and underestimation/overestimation. We study the four scenarios, and we establish upper bounds on the cost ratio for three of them (the cost ratio for the original scenario was known to be unbounded),  providing a complete global picture of the behavior of present-biased agents, as far as optimisation tasks are concerned. 
\end{abstract}

\section{Introduction}
  \emph{Present bias} is the term used in behavioral economics to describe 
the gap between the anticipated costs  of future actions and their real costs.  
A simple mathematical model of  
    present bias was suggested by Akerlof~\cite{Akerlof91}. In this model 
the cost of an action that will be  perceived in the future is assumed to be $\beta$ times smaller than its actual cost,  for some constant  $\beta <1$, called the \emph{degree of present bias}.  The model was used for studying various aspects of human behavior related to \emph{time-inconsistent} planning, including \emph{procrastination}, and \emph{abandonment}. 

Kleinberg and Oren~\cite{KleinbergO14,KleinbergO18} introduced an elegant  graph-theoretic model  encapsulating Akerlof's model.  
 The approach of Kleinberg and Oren  is based on analyzing how an agent navigates from a source~$s$ to a target~$t$ in a directed  edge-weighted graph~$G$, called \emph{task graph}. At any step, the agent chooses the next edge to traverse  from the current vertex~$v$ thanks to an estimation of the length of the shortest path from~$v$ to~$t$ passing through each edge outgoing from~$v$. A crucial characteristic of the model is that the estimation of the path lengths is \emph{present-biased}. More specifically, the model of Kleinberg and Oren includes a positive parameter $\beta<1$, the degree of present bias,  and the length of a path $x_0,\dots,x_k$ from $x_0=v$ to $x_k=t$ in~$G$ is evaluated as 
 $
 \w_0+ \beta  \, \sum_{i=1}^{k-1}\w_i
 $
  where $\w_i$ denotes the weight of edge $(x_i,x_{i+1})$, for every $i\in\{0,\dots,k-1\}$. As a result, the agent may choose an outgoing edge that is not on any shortest path from~$v$ to~$t$, modeling \emph{procrastination} by underestimating the cost of future actions to be performed whenever acting now in some given way. With this effect cumulating along its way from~$s$ to~$t$, the agent may significantly diverge from shortest $s$-$t$ paths, which demonstrates the negative impact of procrastination. Moreover, 
 the \emph{cost ratio}, which is  the ratio between the cost of the path traversed by an agent with present bias and the cost of a shortest path, could be arbitrarily large. 
   An illustrating example is depicted on Fig.~\ref{fig:Akerlof}, borrowed from~\cite{KleinbergO18}, and originally due to Akerlof~\cite{Akerlof91}. Among many results, Kleinberg and Oren showed that  any graph in which a present-biased agent incurs significantly more cost than an optimal agent must contain a large specific structure as a minor. This structure, called \emph{procrastination structure}, is specifically the one depicted on Fig.~\ref{fig:Akerlof}. 

\begin{figure}[h]
\centering
\includegraphics{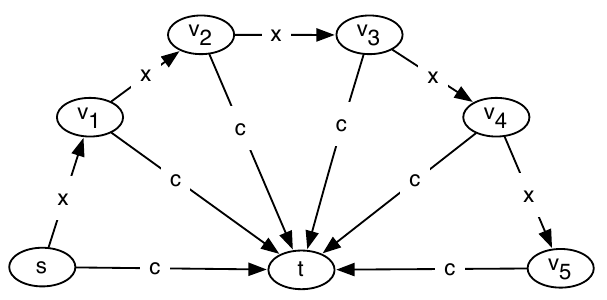}
\caption{\sl Procrastination structure as displayed in~\cite{KleinbergO18}; Assuming $\mathsf{x}+\beta \mathsf{c} < \mathsf{c}$, the path followed by the agent is $\mathsf{s},\mathsf{v}_1,\dots,\mathsf{v}_5,\mathsf{t}$; The ratio between the length of the path followed by the agent and the shortest $\mathsf{s}$-$\mathsf{t}$ path can be made arbitrarily large by adding more nodes $\mathsf{v}_k$ with $k\geq 5$.}
\label{fig:Akerlof}
\end{figure}

In this paper, we are interested in understanding what kind of \emph{tasks} performed by the agent result in large cost ratio. 
 Let us take the concrete example of an agent willing to acquire the knowledge of a set of scientific concepts, by reading books. Each book covers a certain number of these concepts, and the agent's objective is to read as few books as possible. More generally, each book could also be weighted according to, say, its accessibility to a general reader, or its length. The agent's objective is then to read a collection of books with minimum total weight. Both the weight and the collection of concepts covered by each book are known to the agent a priori. This scenario is obviously an instance of the (weighted) \emph{set-cover} problem. Let us assume, for simplicity, that the agent has access to a \emph{time-biased} oracle providing it with the following information. Given the subset of concepts already acquired by the agent when it queries the oracle, the latter returns to the agent a set $\{b_0,\dots,b_{k-1}\}$ of books minimizing  $\w_0+ \beta  \, \sum_{i=1}^{k-1}\w_i$ among all sets of books covering the concepts yet to be acquired by the agent, where $\w_0\leq \w_1 \leq \dots \leq \w_{k-1}$ are the respective weights of the books $b_0,\dots,b_{k-1}$. This corresponds to the  procrastination scenario in which the agent picks the easiest book to read now, and underestimates the cost of reading the remaining books later. Then the agent moves on by reading~$b_0$, and querying the oracle for figuring out the next book to read for covering the remaining uncovered concepts after having read book~$b_0$. 

The question is: by how much the agent eventually diverges from the optimal set of books to be read? This set-cover example fits with the framework of Kleinberg and Oren, by defining the vertex set of the task graph as the set of all subsets of concepts, and placing an edge $(u,v)$  of weight~$\w$ from~$u$ to~$v$ if there exists a book~$b$ of weight~$\w$ such that~$v$ is the union of~$u$ and the concepts covered by~$b$. In this setting, the agent needs to move from the source $s=\varnothing$ to the target~$t$ corresponding to the set of all the concepts to be acquired by the agent. Under this setting, the question can be reformulated as: under which circumstances the set-cover problem yields a large  cost ratio?

More generally, let us consider a  \emph{minimization} problem where, for every feasible solution $S$ of every instance of the problem, the cost~$c(S)$ of $S$ can be expressed as  $c(S)=\sum_{x\in S}\w(x)$ for some weight function~$\w$. This includes, e.g., set-cover, min-cut, minimum dominating set, feedback-vertex set, etc. We then define the biased cost $c_\beta$ as 
\begin{equation}\label{eq:cost-beta}
c_\beta(S)=\w(x^*)+\beta \, c(S\smallsetminus \{x^*\}),
\end{equation}
where $x^*=\argmin_{x\in S}\w(x)$. Given an instance $I$ of the minimization problem at hand, the agent aims at finding a feasible solution $S\in I$ minimizing $c(S)$. It does so using the following present-biased planning, where $I_0=I$.

\paragraph{\bf Minimization scenario:} 

For $k\geq 0$, given an instance~$I_k$, the agent computes the feasible solution~$S_k$ with minimum cost $c_\beta(S_k)$ among all feasible solutions for~$I_k$. Let $x^*_k = \argmin_{x\in S_k}w(x)$. The agent stops whenever  
$
\{x^*_0,x^*_1,\dots,x^*_k\}
$
is a feasible solution for~$I$. Otherwise, the agent  moves to $I_{k+1}=I_k\smallsetminus x^*_k$, that is, the instance obtained from $I_k$ when one assumes~$x^*_k$ selected in the solution. 

\medskip

This general scenario is indeed captured by the Kleinberg and Oren model, by defining the vertex set of the graph task graph as the set of all ``sub-instances'' of the instance~$I$ at hand, and placing an edge $(u,v)$  of weight~$w$ from~$u$ to~$v$ if there exists an element $x$ of weight~$\w$ such that~$v$ results from~$u$ by adding~$x$ to the current solution. The issue is to analyze how far the solution computed by the present-biased agent is from the optimal solution. The first question addressed in this paper is therefore the following. 

\begin{question}\label{prob:procrastination}
For which minimization tasks a large cost ratio 
may appear? 
\end{question}
In the models of  Akerlof~\cite{Akerlof91} and  Kleinberg and Oren~\cite{KleinbergO18} the degree $\beta$ of present bias  is assumed to be less than one. However, there are natural situations where underestimating the future costs does not hold. For example, in their influential paper, Loewenstein,  O'Donoghue, and Rabin~\cite{loewenstein2003projection} gave a number of  examples  from a variety of domains demonstrating the prevalence of \emph{projection bias}. In particular, they reported an experiment by Jepson, Loewenstein, and Ubel~\cite{JepsonLoewensteinUbel2001} who ``asked people waiting for a kidney transplant to predict what their quality of life would be one year later if they did or did not receive a transplant, and then asked those same people one year later to report their quality of life. Patients who received transplants predicted a higher quality of life than they ended up reporting, and those who did not predicted a lower quality of life than they ended up reporting''. In other words, there are situations in which people may also \emph{overestimate} the future costs. In the model of Kleinberg and Oren~\cite{KleinbergO18} overestimation bias corresponds to the situation of putting the  degree of present bias $\beta>1$.
This brings us to the second question. 

\begin{question}\label{prob:procrastination2}
Could a large cost ratio appear for minimization problems when the  degree of present bias $\beta$ is more than $1$?
\end{question}

Reformulating the analysis of procrastination, as stated in Question~\ref{prob:procrastination}, provides inspiration for tackling related problems. 
As a matter of fact, under the framework of  Kleinberg and Oren, procrastination is a priori associated to \emph{minimization} problems. We also investigate \emph{maximization} problems, in which a present-biased agent aims at, say, maximizing its revenue by making a sequence of actions, each providing some immediate gain that the agent maximizes while underestimating the incomes resulting from future actions. As a concrete example, let us consider an instance of Knapsack. The agent constructs a solution gradually by picking the item $x_0$ of highest value $\w(x_0)$ in a feasible set $\{x_0,\dots,x_{k-1}\}$ of items that is maximizing $\w(x_0)+ \beta  \, \sum_{i=1}^{k-1}\w(x_i)$ for the current sub-instance of Knapsack. In general, given an instance $I$ of a maximisation problem, we assume that the agent applies the following present-biased planning, with $I_0=I$:

\paragraph{\bf Maximization scenario:}

Given an instance~$I_k$ for $k\geq 0$, the agent computes the feasible solution~$S_k$ with maximum cost $c_\beta(S_k)$ among all feasible solutions for~$I_k$ --- where the definition of~$x^*$ in Eq.~\eqref{eq:cost-beta} is replaced by $x^*=\argmax_{x\in S}w(x)$. With $x^*_k = \argmax_{x\in S_k}w(x)$, the agent stops whenever  $\{x^*_0$, $x^*_1$, $\dots$, $x^*_k\}$ is an inclusion-wise maximal feasible solution for~$I$, and moves to $I_{k+1}=I_k\smallsetminus x^*_k$ otherwise. 

\medskip

We are interested in analyzing how far the solution computed by the present-biased agent is from the optimal solution. More generally even, we aim at revisiting time-inconsistent planning by considering both cases $\beta<1$ and $\beta>1$, that is, not only  scenarios in which the agent underestimates the cost of future actions, but also scenarios in which the agent  \emph{overestimates} the cost of future actions.  The last, more general question addressed in this paper is therefore the following. 

\begin{question}\label{prob:general}
For which optimization tasks, and for which time-inconsistency planning (underestimation, or overestimation of the future actions), the  solutions computed by a present-biased agent are far from optimal, and for which they are close?
\end{question}

For all these problems, we study the \emph{cost ratio} $\varrho=\frac{c(S)}{\opt}$ (resp., $\varrho=\frac{\opt}{c(S)}$) where~$S$ is the solution returned by the present-biased agent, and $\opt=c(S_{\opt})$ is the cost of an optimal solution for the same instance of the considered minimization (resp., maximization) problem. 

\subsection{Our Results}

Focussing on agents aiming at solving tasks, and not just on agents aiming at reaching targets in abstract graphs, as in the generic model in~\cite{KleinbergO18}, allows us not only to refine the worst-case analysis of present-biased agents, but also to extend this analysis to scenarios corresponding to overestimating the future costs to be incurred by the agents (by setting the degree $\beta$ of present bias larger than~1), and to maximisation problems. 

\paragraph{\bf Minimization \& underestimation.}

In the original setting of minimization problems, with underestimation of future costs (i.e., $\beta<1$), we show that the cost ratio~$\varrho$ of an agent performing~$k$ steps, that is, computes a feasible solution $\{x^*_1,\dots,x^*_k\}$, satisfies $\varrho\leq k$. This is in contrast to the general model in~\cite{KleinbergO18}, in which an agent can incur a cost ratio exponential in~$k$ when returning a $k$-edge path from the source to the target.  Hence, in particular, our minimization scenarios do not produce the worst cases examples constructed in~\cite{KleinbergO18}, i.e., obtained by considering travels from sources to targets in arbitrary weighted graphs. 

On the other hand, we also show that a ``minor structure'' bearing similarities with the one identified in~\cite{KleinbergO18} can be identified. Namely, if an agent incurs a large cost ratio, then the minimization problem addressed by the agent includes a large instance of a specific form of minimization problem. 

\paragraph{\bf Min/maximization \& under/overestimation.}

Interestingly, the original setting of minimization problems, with underestimation of future costs, is far from reflecting the whole nature of the behavior of present-biased agents. Indeed, while minimization problems with underestimation of future costs may result in unbounded cost ratios,  the worst-case cost ratios corresponding to the three other settings can be upper bounded, some by a constant independent of the task at hand. Specifically, we show that: 
\begin{itemize}
\item For any minimization problem with $\beta>1$, the cost ratio is at most~$\beta$; 
\item For any maximization problem with $\beta<1$, the cost ratio is at most~$\frac{1}{\beta}$; 
\item For any maximization problem with $\beta>1$, the cost ratio is at most~$\beta^c$, where $c\leq\opt$ is the cost of a solution constructed by the agent. 
\end{itemize}
Our results are summarized in Table~\ref{tab:summary2}. 

\begin{table}[ht]
\definecolor{thmgray}{gray}{0.3}
\begin{center}
\begin{tabular}{r|c|c|}
 			& minimization & maximization \\
\hline
 $\beta<1$ 	& $\infty$~\cite{KleinbergO18} & $   1/\beta$ \textcolor{thmgray}{[Thm~\ref{thm:min-max-beta}(i)]} \\
 \hline 
$\beta>1$ 	& $ \beta$ \textcolor{thmgray}{[Thm~\ref{thm:min-betamoreone}]}& $\left(1+\log\beta\right)\frac{\opt}{\log \opt}$ \textcolor{thmgray}{[Cor~\ref{cor:upper}]} \\
\hline
\end{tabular}
\end{center}
\caption{\sl Upper bounds on the worst case ratio between the solution cost returned by the present-biased agent  and the optimal solution~$\opt$. The symbol $\infty$ means that the cost ratio can be arbitrarily large, independently of the values of $\beta$, and $\opt$. }
\label{tab:summary2}
\end{table}

Let us remark that, for minimization problems with $\beta>1$, as well as for maximization problems with $\beta<1$, we have that the cost ratio is bounded by a constant. However, for  maximization problems with $\beta>1$,   the cost ratio can be   exponential in the cost of the computed solution. We show that this exponential upper bound  is essentially tight.

\paragraph{\bf Approximated evaluations.}

Actually, in many settings, discrete optimization problems are hard. Therefore, for evaluating the best feasible solution according to the biased cost function $c_\beta$, an agent may have to solve computationally  intractable problems. Thus, in a more realistic scenario, we assume that, instead of computing an 
optimal solution for $c_\beta$ at every step, the agent computes an $\alpha$-approximate solution. 

\paragraph{\bf Fine-grained analysis.}

In contrast to the general model in~\cite{KleinbergO18}, the refined model of this paper enables fine-grain analysis of the agents' strategies, that is, it enables identifying different behaviors of the agents as a function of the considered  optimisation problems. Specifically,  there are natural minimization problems for which specific bounds on the cost ratio can be established.    

To illustrate the interest of focusing on optimisation tasks, we study two tasks in detail, namely \emph{set-cover} and \emph{hitting set}, and show that they appear to behave quite differently. For set-cover, we show that the cost ratio is at most $d\cdot\opt$, where $d$ is the maximum size of the sets. For hitting set, we show that the cost ratio is at most $d!\,(\frac{1}{\beta}\opt)^{d}$, again for $d$ equal to the maximum size of the sets. 
This latter result is using the sunflower lemma of Erd\H{o}s and Rado~\cite{ErdosR60}.

Finally, we identify a simple restriction of the agent's strategy, which guarantees that the cost of the solution computed by the agent is not more than~$\beta$ times the cost of an optimal solution. 

\subsection{Related Work}

Our work is directly inspired by the aforementioned contribution of Kleinberg and Oren~\cite{KleinbergO14}, which was itself motivated by the earlier work by Akerlof~\cite{Akerlof91}. We refer to~\cite{KleinbergO14,KleinbergO18} for a survey of earlier work on time-inconsistent planning, with connections to procrastination, abandonment, and choice reduction. Hereafter, we discuss solely~\cite{KleinbergO14}, and the subsequent work. Using their graph-theoretic framework, Kleinberg and Oren reasoned about time-inconsistency effects. In particular, they provided a characterization of the graphs yielding the worst-case cost-ratio,
and they showed that, despite the fact that the degree $\beta$ of present bias can take all possible values in $[0,1]$, it remains that, for any given digraph,  the collection of distinct $s$-$t$ paths computed  by present-biased agents for all degrees of present bias is of size at most polynomial  in  the number of nodes. They also showed how to improve the behavior of present-biased agents by deleting edges and nodes, and they provided a characterization of the subgraphs supporting efficient agent's behavior. Finally, they analyzed the case of a collection of agents with different degrees of present bias, and showed how to divide the global task to be performed by the agents into ``easier'' sub-tasks, so that each agent performs efficiently her sub-tasks.   

As far as we are aware of, all contributions subsequent to~\cite{KleinbergO14}, and related to our paper, essentially remain within the same graph theoretic framework as~\cite{KleinbergO14}, and focus on algorithmic problems related to this framework. In particular, Albers and Kraft~\cite{Albers2018} studied the ability to place rewards at nodes for motivating and guiding the agent. They show hardness and inaproximability results, and provide an approximation algorithm whose performances match the inaproximability bound. The same authors considered another approach in~\cite{AlbersK17} for overcoming these hardness issues, by allowing not to remove edges but to increase their weight. They were able to design a 2-approximation algorithm in this context. Tang et al.~\cite{tang2017computational} also proved hardness results related to the placement of rewards, and showed that  finding a motivating subgraph is NP-hard. Gravin et al.~\cite{GravinILP16} (see~\cite{GravinILP16arch} for the full paper) extended the model by considering the case where the degree of present bias may vary over time, drawn independently at each step from a fixed distribution. In particular, they described the structure of the worst-case graph for any distribution, and derived conditions on this distribution under which the worst-case cost ratio is exponential or constant. 

Kleinberg, Oren, and Raghavan~\cite{KleinbergOR16,KleinbergOR17} revisited the model in~\cite{KleinbergO14}. In~\cite{KleinbergOR16}, they were considering agents estimating erroneously the degree $\beta$ of present bias, either underestimating or overestimating that degree, and compared the behavior of such agents with the behavior of ``sophisticated'' agents who 
are aware of their present-biased behavior in future and take this into account in their strategies. 
 In~\cite{KleinbergOR17}, they extended the model by considering not only agents suffering from present-biases, but also from \emph{sunk-cost} bias, i.e., the tendency to incorporate costs experienced in the past into one's plans for the future. Albers and Kraft~\cite{AlbersK17b} considered a model with uncertainty, bearing similarities with~\cite{KleinbergOR16}, in which the agent is solely aware that the degree of present bias belongs to some set $B \subseteq (0, 1]$, and may or may not vary over time. 

\section{Procrastination under minimization problems}\label{sec:min1}

This section includes a formal definition of inconsistent planning by present-biased agents, and describes two extreme scenarios: one in which a present-biased agent constructs worst case plannings, and one in which the plannings generated by a present-biased agent are close to optimal. 

\subsection{Model and Definition} 
\label{sec:modelanddefinition}

We consider minimization problems defined as triples  $(\m{I},F,c)$, where $\m{I}$ is the set of instances (e.g., the set of all graphs), $F$ is a function that returns the set $F(I)$ of feasible solutions for every instance $I\in\m{I}$ (e.g., the set of all edge-cuts of any given graph), and $c$ is a non-negative 
function returning the cost $c(I,S)$ of every feasible solution $S\in F(I)$ of every instance $I\in\m{I}$ (e.g., the number of edges in a cut). We focus solely on optimization problems for which 
\begin{itemize}
\item[(i)] a finite ground set $\m{S}_I\neq \varnothing$ is associated to every instance $I$, 
\item[(ii)] every feasible solution for $I$ is a set $S \subseteq \m{S}_I$, and
\item[(iii)] $c(I,S)=\sum_{x \in S}\w(x)$ where $\w:\m{S}_I\to \mathbb{N}$ is a weight function. 
\end{itemize}
Moreover, we enforce two properties that are satisfied by classical minimization problems.  Specifically we assume that: 
\begin{itemize}
\item All considered problems are closed downward, that is, for every considered minimization problem $(\m{I},F,c)$, every $I\in\m{I}$, and every $x\in \m{S}_I$, the instance $I\smallsetminus \{x\}$ defined by the feasible solutions $S\smallsetminus\{x\}$, for every $S\in F(I)$, is in~$\m{I}$ with the same weight function~$\w$ as for~$I$. This guarantees that an agent cannot be stuck after having performed some task~$x$, as the sub-problem $I\smallsetminus \{x\}$ remains solvable for every~$x$. 
\item All considered feasible solutions are closed upward, that is, for every minimization problem $(\m{I},F,c)$, and every $I\in\m{I}$, $\m{S}_I$ is a feasible solution, and, for every $S\in F(I)$, if $S \subseteq S' \subseteq \m{S}_I$ then $S'\in F(I)$. This guarantees that an agent performing a sequence of tasks $x_0,x_1,\dots$ eventually computes a feasible solution. 
\end{itemize}
Inconsistent planning can be rephrased in this framework as follows. 

\paragraph{\bf Inconsistent Planning.} 

Let $\beta<1$ be a positive constant. Given a minimization problems $(\m{I},F,c)$, the biased cost $c_\beta$ satisfies $c_\beta(S)=\w(x)+\beta \, c(S\smallsetminus \{x\})$ for every feasible solution $S$ of every instance $I\in \m{I}$, where  $x=\argmin_{y\in S}\w(y)$. Given an instance $I$, the agent aims at finding a feasible solution $S\in I$ by applying a present-based planning defined inductively as follows. Let $I_0=I$. For every $k\geq 0$, given the instance~$I_k$, the agent computes a feasible solution~$S_k$ with minimum cost $c_\beta(S_k)$ among all feasible solutions for~$I_k$. Let $x_k = \argmin_{y\in S_k}\w(y)$. The agent stops whenever  $\{x_0,x_1,\dots,x_k\}$ is a feasible solution for~$I$. Otherwise,  it carries on the construction of the solution by considering $I_{k+1}=I_k\smallsetminus \{x_k\}$. 

\medskip

Observe that inconsistent planning terminates. Indeed, since the instances of the considered problem  $(\m{I},F,c)$ are closed downwards, $I_k=I\smallsetminus \{x_0,\dots,x_{k-1}\}\in \m{I}$ for every $k\geq 0$, i.e., inconsistent planning is well defined. Moreover, since the feasible solutions are closed upward, there exists $k\geq 0$ such that $\{x_0,x_1,\dots,x_k\}$ is a feasible solution for~$I$.

The \emph{cost} of inconsistent planning is defined as the ratio $\varrho=\frac{c(S)}{\opt}$ where~$S=\{x_0,x_1,\dots,x_k\}$ is the solution returned by the agent, and $\opt=c(S_{\opt})$ is the cost of an optimal solution $S_{\opt}$ for the same instance of the considered minimization problem. 

\paragraph{\bf Approximated evaluation.}

It can happen that the considered minimization problem is computationally hard, say NP-hard, and the agent is  unable to compute a feasible solution $S$ of minimum cost $c_\beta(S)$ exactly. Then the agent can pick an approximate solution instead. For this situation, we modify the above strategy of the agent as follows. Assume that the agent has access to an $\alpha$-approximation algorithm $\mathcal{A}$ that, given an instance $I$, computes a feasible solution $S^*$ to the instance such that $c_\beta(S^*)\leq \alpha\min c_\beta(S)$, where minimum is taken over all feasible solution $S$ to $I$. For simplicity, we assume throughout the paper that $\alpha\geq 1$ is a constant, but our results can be generalized for the case, where $\alpha$ is a function of the input size or $\opt$. 

Again, the agent uses an inductive scheme to construct a solution. Initially, $I_0=I$. For every $k\geq 0$, given the instance~$I_k$, the agent computes a feasible solution~$S_k$ of cost at most $\alpha\min c_\beta(S)$, where  the minimum is taken over all feasible solutions $S$ of $I_k$. Then, exactly as before, the agent finds $x_k = \argmin_{y\in S_k}\w(y)$. 
If $\{x_0,x_1,\dots,x_k\}$ is a feasible solution for~$I$, then the agent stops. Otherwise,  we set $I_{k+1}=I_k\smallsetminus \{x_k\}$ and proceed. 
The \emph{$\alpha$-approximative cost} of inconsistent planning is defined as the ratio $\varrho_\alpha=\frac{c(S)}{\opt}$ where~$S=\{x_0,x_1,\dots,x_k\}$. Clearly, the $1$-approximative cost coincides with $\varrho$.

\subsection{Worst-Case Present-Biased Planning}\label{subsec:worst_case}

We start with a simple observation. Given a feasible solution $S$ for an instance $I$ of a minimization problem, we say that $x\in S$ is \emph{superfluous} in $S$ if $S\smallsetminus \{x\}$ is also feasible for~$I$. The ability for the agent to make superfluous choices yields trivial scenarios in which the cost ratio~$\varrho$ can be arbitrarily large. This is for instance the case of an instance of set-cover, defined as one set~$y=\{1,\dots,n\}$ of weight~$c>1$ covering all elements, and $n$ sets $x_i=\{i\}$, each of weight~1, for $i=1,\dots,n$. Every solution $S_i=\{x_i,y\}$ is feasible, for $i=1,\dots,n$, and satisfies $c_\beta(S_i)=1+\beta c$. As a result, whenever $1+\beta c<c$, the present-biased agent constructs the solution $S=\{x_1,\dots,x_n\}$, which yields a cost ratio $\varrho=n/c$, which can be made arbitrarily large as $n$ grows. Instead, if the agent is bounded to avoid superfluous choices, that is, to systematically choose \emph{minimal} feasible solutions, then only the feasible solutions $\{y\}$ and $\{x_1,\dots,x_n\}$ can be considered. As a result, 
the agent will compute the optimal solution $S_\opt=\{y\}$ if $c<1+\beta(n-1)$. 

Unfortunately, bounding the agent to systematically choose \emph{minimal} feasible solutions, i.e., solutions with no superfluous elements, is not sufficient to avoid procrastination. That is, it does not prevent the agent from computing solution with high cost ratio. This is for instance the case of another instance of set-cover, that we denote by~$I_{SC}^{(n)}$ for further references. 

\paragraph{Set-cover instance $I_{SC}^{(n)}$:} specified by $2n$ subsets of $\{1,\dots,n\}$ defined as $x_i=\{i\}$ with weights~$1$, and $y_i=\{i,\dots,n\}$ with weight~$c>1$, for $i=1,\dots,n$.  

\medskip

The minimal feasible solutions of $I_{SC}^{(n)}$ are $\{y_1\}$ of weight~$c$, $\{x_1,\dots,x_i,y_{i+1}\}$ of weight $i+c$ for $i=1,\dots,n-1$, and $\{x_1,\dots,x_n\}$ of weight~$n$. Whenever $1+\beta c<c$, a time-biased agent bounded to make non-superfluous choices only yet constructs the solution $\{x_1,\dots,x_n\}$ which yields a cost ratio $\varrho=n/c$, which can be made arbitrarily large as $n$ grows. We need the following lemma about biased solutions for minimization problems.

\begin{lemma}\label{lem:biased}
Let $\alpha\geq 1$ and let $S^*$ be a feasible solution for minimization problem, satisfying $c_\beta(S^*)\leq\alpha \min c_\beta(S)$, where the minimum is taken over all feasible solutions. Then 
\begin{itemize}
\item[(i)] $\w(x)\leq \alpha \cdot \opt$ for $x=\argmin_{y\in S^*}\w(y)$, and 
\item[(ii)] $c(S^*)\leq \frac{\alpha}{\beta}\opt$.
\end{itemize}
\end{lemma}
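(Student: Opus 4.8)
The plan is to exploit the fact that an optimal solution $S_{\opt}$ is itself a feasible solution, so the $\alpha$-approximate biased cost of $S^*$ can be compared directly against the biased cost of $S_{\opt}$. First I would observe that for any feasible solution $S$ with smallest-weight element $x=\argmin_{y\in S}\w(y)$, the biased cost satisfies the sandwich
\begin{equation*}
\beta\, c(S) \;\le\; \beta\, \w(x) + \beta\, c(S\smallsetminus\{x\}) \;=\; \beta\,\w(x)+\bigl(c_\beta(S)-\w(x)\bigr)\;\le\; c_\beta(S)\;\le\; c(S),
\end{equation*}
using $\beta<1$ and non-negativity of the weights; in particular $c_\beta(S)\le c(S)$ for every feasible $S$, and $c_\beta(S)\ge \beta\, c(S)$.

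For part~(ii), apply the definition of $S^*$ together with the first inequality above applied to $S_{\opt}$: since $S_{\opt}$ is feasible, $\min c_\beta(S)\le c_\beta(S_{\opt})\le c(S_{\opt})=\opt$, hence $c_\beta(S^*)\le \alpha\,\opt$. Combining this with $c_\beta(S^*)\ge \beta\, c(S^*)$ gives $\beta\, c(S^*)\le \alpha\,\opt$, i.e.\ $c(S^*)\le \frac{\alpha}{\beta}\opt$, as required.

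For part~(i), note that $\w(x)$ is the smallest weight in $S^*$ and it is the \emph{unbiased} summand in $c_\beta(S^*)=\w(x)+\beta\, c(S^*\smallsetminus\{x\})$, so $\w(x)\le c_\beta(S^*)$. Chaining with the bound $c_\beta(S^*)\le \alpha\,\opt$ already established yields $\w(x)\le \alpha\,\opt$. I expect no serious obstacle here: the only point that needs a little care is making sure the comparison solution used to bound $\min c_\beta(S)$ is legitimately feasible (it is, being $S_{\opt}$), and that we do not accidentally need $x$ to lie in $S_{\opt}$ — we do not, since we only use $S_{\opt}$ to upper-bound the minimum biased cost. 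Both inequalities are then immediate from the elementary sandwich $\beta\,c(S)\le c_\beta(S)\le c(S)$ and the $\alpha$-approximation guarantee.
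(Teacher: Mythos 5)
Your proposal is correct and follows essentially the same route as the paper's proof: both arguments bound $\w(x)\leq c_\beta(S^*)\leq \alpha\, c_\beta(S_{\opt})\leq \alpha\,\opt$ for part (i) and use $\beta\, c(S^*)\leq c_\beta(S^*)$ together with the same chain for part (ii). Your explicit ``sandwich'' $\beta\,c(S)\leq c_\beta(S)\leq c(S)$ is just a packaged form of the inequalities the paper applies inline, so there is no substantive difference.
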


\begin{proof}
Let $S$ be an optimum solution.  As $\beta<1$, it follows that
$
\w(x)\leq \w(x)+\beta \cdot\w(S^*\setminus \{x\})
= c_\beta(S^*)
\leq \alpha \cdot c_\beta(S)
\leq \alpha \cdot c(S)
=\alpha \cdot\opt,
$
and this proves (i). To show (ii), note that
$
c(S^*) = \w(x)+\w(S^*\setminus\{x\})=\frac{1}{\beta}(\beta \w(x)+\beta\w(S^*\setminus\{x\})),
$
from which it follows that 
$
c(S^*) \leq  \frac{1}{\beta}(\w(x)+\beta\w(S^*\setminus \{x\}))=
\frac{1}{\beta}c_\beta(S^*)\leq \frac{\alpha}{\beta} c_\beta(S)\leq \frac{\alpha}{\beta}c(S)=\frac{\alpha}{\beta}\opt,
$
which completes the proof. 
\end{proof}

Lemma~\ref{lem:biased} has a simple consequence that also can be derived from the results of Gravin et al.~\cite[Claim~5.1]{GravinILP16arch}, that we state as a theorem despite its simplicity, as it illustrates one major difference between our model and the model in~\cite{KleinbergO18}. 

\begin{theorem}\label{theo:linearbound} 
For every $\alpha\geq 1$ and every minimization problem, the $\alpha$-appro\-xi\-mative cost ratio~$\varrho_\alpha$ cannot exceed $\alpha \cdot k$ where $k$ is the number of steps performed by the agents to construct the feasible solution $\{x_1,\dots,x_k\}$ by following the time-biased strategy. 
\end{theorem}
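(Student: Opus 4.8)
The plan is to bound the weight of each element selected by the agent by $\alpha\cdot\opt$, and then simply sum over the $k$ steps. At step~$j$ the agent works with the sub-instance $I_j=I\setminus\{x_0,\dots,x_{j-1}\}$ (which lies in $\m I$ because the problem is closed downward), computes an $\alpha$-approximate biased solution $S_j$ for $I_j$, and selects $x_j=\argmin_{y\in S_j}\w(y)$. Applying Lemma~\ref{lem:biased}(i) to the instance $I_j$ yields $\w(x_j)\leq \alpha\cdot\opt(I_j)$, where $\opt(I_j)$ denotes the optimum cost for $I_j$.

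The only genuine step is the monotonicity $\opt(I_j)\leq\opt(I)$. This follows from the closed-downward assumption together with non-negativity of $\w$: if $S$ is an optimal solution for~$I$, then $S\setminus\{x_0,\dots,x_{j-1}\}$ is feasible for $I_j$ with the same weight function, and $c\big(S\setminus\{x_0,\dots,x_{j-1}\}\big)\leq c(S)=\opt(I)$; hence $\opt(I_j)\leq\opt(I)=\opt$. Combining with the previous paragraph gives $\w(x_j)\leq\alpha\cdot\opt$ for every~$j$.

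Finally, the agent terminates after producing the feasible solution $S=\{x_1,\dots,x_k\}$, whose cost is $c(S)=\sum_{j=1}^{k}\w(x_j)$ by property~(iii) of the model (the selected elements being pairwise distinct, since $x_j$ is removed from the ground set at step~$j$). Therefore $c(S)\leq k\,\alpha\cdot\opt$, i.e.\ $\varrho_\alpha=c(S)/\opt\leq\alpha k$. I do not expect a real obstacle here: the argument is a one-line summation, and the only place the structural hypotheses on the problem are used is to guarantee that each $I_j$ is a legitimate instance and that $\opt$ is non-increasing under element deletion, which is precisely what the ``closed downward'' condition provides.
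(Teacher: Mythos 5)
Your proposal is correct and follows essentially the same route as the paper: apply Lemma~\ref{lem:biased}(i) at each step to bound $\w(x_j)\leq\alpha\cdot\opt$ and sum over the $k$ steps. The only difference is that you spell out the monotonicity $\opt(I_j)\leq\opt(I)$ via the closed-downward property, a step the paper's proof leaves implicit, which is a harmless (and slightly more careful) addition.
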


\begin{proof}
By Lemma~\ref{lem:biased}(i), at any step~$i\geq 1$ of the construction, the agent adds an element $x_i\in\m{S}_I$ in the current partial solution, and this element satisfies 
$
\w(x_i)\leq \alpha \, c_\beta(S_\opt) \leq \alpha \, c(S_\opt) =\alpha \cdot \opt. 
$
Therefore, if the agent computes a solution $\{x_1,\dots,x_k\}$, then the $\alpha$-approximative cost ratio for this solution satisfies
$
\varrho_\alpha = \sum_{i=1}^k\w(x_i)/\opt \leq \alpha\, k,  
$
as claimed.
\end{proof}

\paragraph{Remark.} 
The bound in Theorem~\ref{theo:linearbound} is in contrast to the general model in~\cite{KleinbergO18}, in which an agent performing $k$ steps can incur a cost ratio exponential in~$k$. This is because the model in~\cite{KleinbergO18} enables to construct graphs with arbitrary weights. In particular, in a graph such as the one depicted on Fig.~\ref{fig:Akerlof}, one can set up weights such that the weight of $(v_1,t)$ is a constant time larger than the weight of $(s,t)$, the weight of $(v_2,t)$ is  in turn a constant time larger than the weight of $(v_1,t)$, etc., and still a present-biased agent starting from~$s$ would travel via $v_1,v_2,\dots,v_k$ before reaching~$t$.  In this way, the sum of the weights of the edges traversed by the agent may become exponential in the number of traversed edges. This phenomenon does not occur when focussing on minimization tasks. Indeed, given a partial solution, the cost of completing this solution into a global feasible solution cannot exceed the cost of constructing a global feasible solution from scratch. 

\medskip

It follows from  Theorem~\ref{theo:linearbound} that $I_{SC}^{(n)}$ is a worst-case instance. Interestingly, this instance fits with realistic procrastination scenarios in which the agent has to perform a task (e.g., learning a scientific topic~$T$) by either energetically embracing the task (e.g., by reading a single thick book on topic~$T$), or starting first by an easier subtask (e.g., by first reading a digest of a subtopic of topic~$T$), with the objective of working harder later, but underestimating the cost of this postponed hard work.  The latter strategy may result in procrastination, by performing a very long sequence of subtasks $x_1,x_2,\dots,x_n$. 

\medskip

In  fact, $I_{SC}^{(n)}$ appears to be \emph{the essence of procrastination} in the framework of minimization problems. Indeed, we show that if the cost ratio is large, then the considered instance~$I$ contains an instance of the form $I_{SC}^{(n)}$ with large~$n$. More precisely, we say that an instance $I$ contains an instance $J$ as a \emph{minor} if the ground set $\m{S}_J$ associated to~$J$ is a collection of subsets of the ground set $\m{S}_I$ associated to~$I$, that is 
$
\m{S}_J\subseteq 2^{\m{S}_I},
$
and, for every $\bar{S}\subseteq \m{S}_J$, 
$
 \mbox{$\bar{S}$ is feasible for $J$} 
 $
 if and only if 
 $
  S=\bigcup_{\bar{x}\in\bar{S}}\bar{x} \; \; \mbox{is feasible for $I$.} 
$
Moreover, the weight function $\bar{\w}$ for the elements of $\m{S}_J$ must be induced by the one for $\m{S}_I$ as
$
\bar{\w}(\bar{x})=\sum_{x\in\bar{x}}\w(x)
$
for every $\bar{x}\in\m{S}_J$. Let $J^{(n)}$ be any instance of a minimization problem such that its associated ground set is 
$
\m{S}_{J^{(n)}}=\{x_1,\dots,x_n\}\cup\{y_1,\dots,y_n\},
$
and the set of feasible solutions for $J^{(n)}$ is
$
F(J^{(n)})=\big\{ \{y_1\}, \{x_1,y_2\},\{x_1,x_2,y_3\},\dots,\{x_1,\dots,x_{n-1},y_n\},\\  \{x_1,\dots,x_n\} \big\}.
$
The following result sheds some light on \emph{why} the procrastination structure of Fig.~\ref{fig:Akerlof} pops up.

\begin{theorem}\label{theo:minor}
Let $I$ be an instance of a minimization problem for which the present-biased agent with parameter~$\beta\in(0,1)$ computes a solution for~$I$ with cost $\alpha \cdot \opt(I)$ for some $\alpha>1$. Then $I$ contains~$J^{(n)}$ as a minor for some $n\geq \alpha$, and  the present-biased agent with parameter~$\beta$ computes a solution for~$J^{(n)}$  with cost $\alpha\cdot \opt(J^{(n)})$.
\end{theorem}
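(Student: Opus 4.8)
The plan is to read $J^{(n)}$ off the run of the present-biased agent on $I$. Let $x_1,\dots,x_n$ be the elements the agent commits on $I$, in order, and $S_1,\dots,S_n$ the feasible solutions it evaluates, so $S_k$ is biased-optimal for $I_k=I\smallsetminus\{x_1,\dots,x_{k-1}\}$, $x_k=\argmin_{y\in S_k}\w(y)$, and $\sum_k\w(x_k)=\alpha\cdot\opt(I)$. The bound $n\ge\alpha$ is then immediate from Theorem~\ref{theo:linearbound} (with approximation ratio $1$): the cost ratio of an $n$-step run is at most $n$, so $\alpha\le n$.

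For the gadget I would start with $\bar x_k:=\{x_k\}$, $\bar y_1:=S_\opt$ for some optimal solution $S_\opt$ of $I$, and $\bar y_k:=$ (the $I_k$-restriction of) $S_k$ for $k\ge2$ --- all subsets of $\m{S}_I$, with $\bar y_k\cap\{x_1,\dots,x_{k-1}\}=\varnothing$ since $S_k\subseteq\m{S}_{I_k}$, and with the induced weights $\bar\w(\bar x_k)=\w(x_k)$, $\bar\w(\bar y_k)=c(S_k)$ (resp.\ $\opt(I)$) forced by the definition of minor. The ``easy half'' of the minor property is to check that the union of each generating set of $J^{(n)}$ is feasible for $I$: $\{x_1,\dots,x_n\}$ is feasible because the agent halted on it, and $\{x_1,\dots,x_{k-1}\}\cup\bar y_k$ is feasible because $I$ is closed downward, so $\bar y_k$ is the $I_k$-restriction of a feasible solution of $I$. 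Given this, $\opt(J^{(n)})=\opt(I)$ follows for free, since every feasible solution of $J^{(n)}$ is a superset of a generator and hence has union feasible for $I$ and cost $\ge\opt(I)$, while $\{\bar y_1\}=\{S_\opt\}$ realizes $\opt(I)$.

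The ``hard half'' of the minor property is the obstacle. A short combinatorial analysis of $J^{(n)}$ shows that the subsets of $\m{S}_{J^{(n)}}$ that are not generating but maximal with this property are exactly the $n$ sets $\bar S_m=\{\bar x_i:i\ne m\}\cup\{\bar y_k:k>m\}$, so one must show $\bigcup\bar S_m=\{x_i:i\ne m\}\cup\bigcup_{k>m}\bar y_k$ is infeasible for $I$ for every $m$ (infeasibility of unions is downward closed, so this suffices). For $m=n$ this is just ``$\{x_1,\dots,x_{n-1}\}$ is infeasible'', which holds because the agent did not halt earlier; for $m<n$ one has to use the dynamics of the run, and here the naive choice $\bar x_k=\{x_k\}$ is not always enough: if the agent's solution contains superfluous elements then $\{x_i:i\ne m\}$ may already be feasible, and one must instead use a coarser partition of a minimal feasible sub-solution of $\{x_1,\dots,x_n\}$ into blocks each of which is a bottleneck (its removal destroys feasibility), shrinking $n$ accordingly while keeping $n\ge\alpha$ and re-defining the $\bar y_k$ to match. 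The last thing to verify is that the agent on $J^{(n)}$ actually reproduces the walk $\bar x_1,\dots,\bar x_n$ and hence returns $\{\bar x_1,\dots,\bar x_n\}$ of cost $\sum_k\w(x_k)=\alpha\cdot\opt(J^{(n)})$: this is a step-by-step comparison mirroring the one on $I$ --- at step $k$ the ``stop now'' options are the singletons $\{\bar y_j\}$, $j\le k$, of undiscounted cost $c(S_j)$, whereas the perceived cost of taking one more step along the $\bar x$-path is $\bar\w(\bar x_k)+\beta\,\bar\w(\bar y_{k+1})$, and one shows the latter is smaller using that $S_k$ and $S_j$ were biased-optimal at their steps of the run (so in particular $c(S_j)\ge c_\beta(S_j)$ is large enough that the shortcut is not taken), with ties broken so that the committed element is $\bar x_k$ (committing a $\bar y_j$ at a tie only improves the ratio). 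This comparison, together with the $m<n$ case of the minor check, is where essentially all the work lies.
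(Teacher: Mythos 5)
Your overall strategy is the same as the paper's: read the gadget off the agent's run, set $\bar x_i=\{x_i\}$, attach to each prefix $\{x_1,\dots,x_{k-1}\}$ a ``completion'' element $\bar y_k$, and then show the agent on $J^{(n)}$ retraces the walk $\bar x_1,\dots,\bar x_n$. The gap is in your choice of $\bar y_k$. You take $\bar y_k$ to be (the restriction of) $S_k$, the \emph{biased-optimal} solution the agent evaluated at step $k$; the argument only goes through if $\bar y_k$ is a \emph{cost-optimal} solution of the residual instance $I\smallsetminus\{x_1,\dots,x_{k-1}\}$, which is what the paper uses ($\bar y_k=S_{\opt(I\smallsetminus\{x_1,\dots,x_{k-1}\})}$, so $\bar\w(\bar y_k)=\opt(I\smallsetminus\{x_1,\dots,x_{k-1}\})$). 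The reproduction step needs $\bar\w(\bar x_k)+\beta\,\bar\w(\bar y_{k+1})\leq \bar\w(\bar y_j)$ for all $j\leq k$. With the paper's weights this chains through the agent's original preferences: $\w(x_k)+\beta\,\opt(I_{k+1})\leq c_\beta(S_k)\leq c_\beta(S_{\opt(I_k)})\leq\opt(I_k)\leq\opt(I_j)$. With your weights the left side is $\w(x_k)+\beta\, c(S_{k+1})$, and by Lemma~\ref{lem:biased}(ii) one only knows $c(S_{k+1})\leq\frac{1}{\beta}\opt(I_{k+1})$, so the left side can be as large as $\w(x_k)+\opt(I_{k+1})$, which can exceed $\bar\w(\bar y_1)=\opt(I)$. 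Concretely: take $\beta=0.6$, $\opt(I)=10$ realized by a singleton $\{a\}$, and let the agent at steps $1$ and $2$ pick $S_1=\{x_1,b\}$, $S_2=\{x_2,d\}$ with $\w(x_1)=\w(x_2)=2$ and $\w(b)=\w(d)=12$ (biased cost $9.2<10$, so these are indeed chosen, and the run has ratio $14/10$). In your $J$, step $1$ compares $c_\beta(\{\bar y_1\})=10$ against $c_\beta(\{\bar x_1,\bar y_2\})=2+0.6\cdot 14=10.4$, so the agent on $J$ stops immediately at the optimum and the claimed cost ratio $\alpha$ on $J^{(n)}$ is not reproduced. (Your heuristic ``$c(S_j)\geq c_\beta(S_j)$ is large enough'' gives a lower bound in the wrong direction; biased-optimality of $S_j$ only upper-bounds $c_\beta(S_j)$.)

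Two smaller points. First, with $\bar y_k=S_k$ you also get $x_k\in\bar y_k$, and distinct $\bar y$'s may coincide as subsets of $\m{S}_I$ (e.g.\ $\bar y_1=\bar y_3=\{a\}$ in the example above), so the ground set of $J$ need not have $2n$ distinct elements. Second, your scrutiny of the ``only if'' half of the minor relation (maximal infeasible sets $\{\bar x_i: i\neq m\}\cup\{\bar y_k: k>m\}$, and the danger posed by superfluous elements in the agent's solution) is a legitimate concern that the paper's proof does not explicitly discharge; but your proposed repair (re-blocking into ``bottleneck'' pieces) is only sketched, and in any case it does not compensate for the wrong choice of $\bar y_k$, which is where the construction actually fails.
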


\begin{proof}
Let $S=\{x_1,\dots,x_n\}$ be the final solution selected by the present-biased agent for~$I$, and let~$\w$ be the weight function on the set $\m{S}_I$ associated to~$I$. We have $\sum_{i=1}^n\w(x_i) = \alpha\,\opt(I)$. For every $i\in\{1,\dots,n\}$, let us denote by $\opt(I\smallsetminus\{x_1,\dots,x_i\})$ the cost of an optimal solution for the instance $I\smallsetminus\{x_1,\dots,x_i\}$, and by $S_{ \opt(I\smallsetminus\{x_1,\dots,x_i\})}$ a corresponding optimal solution. For $i=0$, $S_{ \opt(I\smallsetminus\{x_1,\dots,x_i\})}$ is an optimal solution for $I$. For $i=1,\dots,n$, we define 
$$
\bar{x}_i = \{x_i\}, 
\;
\mbox{and}
\;
\bar{y}_i=S_{ \opt(I\smallsetminus\{x_1,\dots,x_{i-1}\})}.
$$
Let $J$  be the instance with ground set 
$
\{\bar{x}_1,\dots,\bar{x}_n\}\cup\{\bar{y}_1,\dots,\bar{y}_n\},
$
and feasible solutions 
$$
\{\bar{y}_1\}, \{\bar{x}_1,\bar{y}_2\},\dots,\{\bar{x}_1,\dots,\bar{x}_{n-1},\bar{y}_n\}, \{\bar{x}_1,\dots,\bar{x}_n\}.
$$
Note that $\bar{x}_i \neq \bar{x}_j$ for every $i\neq j$, because $x_i\neq x_j$  for every $i\neq j$. Also, for every $i\in\{1,\dots,n-1\}$ and $j\in\{1,\dots,n\}$, $\bar{x}_i \neq \bar{y}_j$, because otherwise the sequence constructed by the time-biased agent for~$I$ would stop at~$x_k$  with $k<n$. Therefore, we have $J=J^{(n)}$, and, since $\w(x_i)\leq \opt(I)$ for every $i=1,\dots,n$, $n\geq \alpha$ holds. 

For analyzing the behavior of a present-biased agent with parameter~$\beta$ acting on~$J$, let us assume that $k$ steps were already performed by the agent, with $0\leq k < n$, resulting in constructing the partial solution $\{\bar{x}_1,\dots,\bar{x}_k\}$. (For $k=0$, this partial solution is empty). The feasible solutions for $J_k=J\smallsetminus \{\bar{x}_1,\dots,\bar{x}_k\}$ are 
\[
\{\bar{y}_1\},\dots, \{\bar{y}_{k+1}\}, \{\bar{x}_{k+1},\bar{y}_{k+2}\},\dots,\{\bar{x}_{k+1},\dots,\bar{x}_{n-1},\bar{y}_n\}, \{\bar{x}_{k+1},\dots,\bar{x}_n\}.
\]
Note that, for every $i\in\{1,\dots,n\}$, $\bar{\w}(\bar{x}_i)=\w(x_i)$, and  $\bar{\w}(\bar{y}_i)=\opt(I\smallsetminus\{x_1,\dots,x_{i-1}\})$. We claim that $\bar{x}_{k+1}$ is the next element chosen by the agent. Indeed, note first that $\bar{\w}(\bar{x}_{k+1})\leq \bar{\w}(\bar{y}_{k+2})$, as, otherwise, we would get $\w(x_{k+1}) > \opt(I\smallsetminus\{x_1,\dots,x_{k+1}\})$, contradicting the choice of $x_{k+1}$ by the agent performing on~$I$. As a consequence, 
$$
c_\beta(\{\bar{x}_{k+1},\bar{y}_{k+2}\})=\bar{\w}(\bar{x}_{k+1}) + \beta \,\bar{\w}(\bar{y}_{k+2}). 
$$
It follows from the above that, for every $j=1,\dots,k+1$, $c_\beta(\{\bar{y}_j\})\geq c_\beta(\{\bar{x}_{k+1},\bar{y}_{k+2}\})$, as the reverse inequality would contradict the choice of $x_{k+1}$ by the agent performing on~$I$. For the same reason, for every $\ell\in\{k+1,\dots,n-1\}$, and every $i\in\{k+1,\dots,\ell\}$, we have 
\[
c_\beta(\{\bar{x}_{k+1},\bar{y}_{k+2}\}) \leq \bar{\w}(\bar{x}_i)+\beta \Big (\sum_{j\in\{k+1,\dots,\ell\}\smallsetminus \{i\}} \bar{\w}(\bar{x}_j) +\bar{\w}(\bar{y}_{\ell+1})\Big)
\]
and
\[
c_\beta(\{\bar{x}_{k+1},\bar{y}_{k+2}\}) \leq \bar{\w}(\bar{y}_{\ell+1})+\beta \sum_{j\in\{k+1,\dots,\ell\}} \bar{\w}(\bar{x}_j).
\]
As a consequence, the present-biased agent performing on $J$ picks $\bar{x}_{k+1}$ at step~$k+1$, as claimed. The cost of the solution computed by the agent is $\sum_{i=1}^n\bar{\w}(\bar{x}_i)=\sum_{i=1}^n w(x_i)=\alpha\,\opt(I)$. On the other hand, by construction, $\opt(J)=\bar{\w}(\bar{y}_1)=\opt(I)$. The cost ratio of the solution computed by the agent for~$J$ is thus~$\alpha$, which completes the proof. 
\end{proof}

\subsection{Quasi-Optimal Present-Biased Planning}

In the previous section, we have observed that forcing the agent to avoid superfluous choices, by picking minimal feasible solutions only, does not prevent it from constructing solutions that are arbitrarily far from the optimal. In this section, we show that, by enforcing consistency in the sequence of partial solutions constructed by the agent, such bad behavior does not occur. More specifically, given a feasible solution $S$ for $I$, we say that $x$ is \emph{inconsistent} with $S$ if $x\notin S$. The following result shows that inconsistent choices is what causes high cost ratio. 

\begin{theorem} \label{theo:consistent-is-almost-opt}
An agent using an $\alpha$-approximation algorithm bounded to avoid  inconsistent choices with respect to the feasible solutions used in the past for constructing the current partial solution returns an $\alpha/\beta$-approximation of the optimal solution. This holds independently from whether the agent makes superfluous choices or not. 
\end{theorem}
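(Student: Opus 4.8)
The plan is to show that the consistency constraint pins the agent's output down to being a subset of the very first feasible solution it computes, and then to read off the claimed ratio from Lemma~\ref{lem:biased}(ii).

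Concretely, I would let $S_1$ be the ($\alpha$-approximately biased-optimal) feasible solution the agent computes at its first step, on $I_1=I$, with first choice $x_1=\argmin_{y\in S_1}\w(y)$. The core step is the observation that $S_1$ is one of the ``feasible solutions used in the past for constructing the current partial solution'' at \emph{every} later step, so the prohibition on inconsistent choices forces $x_k\in S_1$ for all $k$ (and $x_1\in S_1$ trivially). Since $I_k=I\smallsetminus\{x_1,\dots,x_{k-1}\}$ has a strictly shrinking ground set, the elements $x_1,x_2,\dots$ are pairwise distinct, and hence the solution $S=\{x_1,\dots,x_m\}$ finally returned satisfies $S\subseteq S_1$. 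I would also note that a consistent, terminating run always exists: the agent can simply never re-plan and keep following $S_1\smallsetminus\{x_1,\dots,x_{k-1}\}$, which is feasible for $I_k$ because the instances are closed downward and stays consistent with all earlier plans since these leftovers are nested; this run stops within $|S_1|$ steps.

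From $S\subseteq S_1$ and the non-negativity of $\w$ we get $c(S)=\sum_{i=1}^m\w(x_i)\le\sum_{x\in S_1}\w(x)=c(S_1)$. Applying Lemma~\ref{lem:biased}(ii) to $S^*=S_1$ on the instance $I$ gives $c(S_1)\le\frac{\alpha}{\beta}\opt$, hence $c(S)\le\frac{\alpha}{\beta}\opt$, i.e.\ the agent returns an $\alpha/\beta$-approximation. Since this chain of inequalities never invokes minimality of any solution, it holds whether or not the agent makes superfluous choices, which is exactly the last sentence of the statement.

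The step I expect to need the most care is fixing the semantics of ``inconsistent choices with respect to the feasible solutions used in the past'': one has to be sure it really enforces consistency with $S_1$ (not merely with the most recent plan), since that is precisely what yields $S\subseteq S_1$, and one has to check that some consistent choice is always available so that the constrained agent is well-defined. Once that is settled, everything quantitative is immediate from Lemma~\ref{lem:biased}(ii).
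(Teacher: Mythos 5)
Your proposal is correct and follows essentially the same route as the paper: consistency forces every selected element $x_k$ to lie in the first computed solution $S_0$ (your $S_1$), hence $S\subseteq S_0$, and Lemma~\ref{lem:biased}(ii) applied to $S_0$ gives $c(S)\leq c(S_0)\leq \frac{\alpha}{\beta}\opt$, with no use of minimality, so superfluous choices are irrelevant. Your additional remark that a consistent terminating run always exists (by simply following the leftovers of $S_0$, which remain feasible since instances are closed downward) is a point the paper leaves implicit, but the quantitative argument is identical.
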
 

\begin{proof}
Let $I$ be an instance of a minimization problem $(\m{I},F,c)$. Let $S=\{x_0,\dots,x_k\}$ be the solution constructed by the agent for~$I$, where $x_i$ is the element computed by the agent at step~$i$, for $i=0,\dots,k$. Let $S_i$ be the feasible solution of $I_i=I\smallsetminus\{x_0,\dots,x_{i-1}\}$ considered by the agent at step~$i$. Since the agent is bounded to avoid any inconsistent choices with respect to the past, we have $x_i \in \cap_{j=0}^i S_j$ for every $i=0,\dots,k$ because $x_i\notin S_j$ for some $j<i$ would be an inconsistent choice. It follows that $S\subseteq S_0$. Therefore, $c(S)\leq c(S_0)$. Since the agent uses an  $\alpha$-approximation algorithm, by Lemma~\ref{lem:biased}(ii), $c(S_0)\leq\frac{\alpha}{\beta}\opt$ and the claim follows. 
\end{proof}

\section{Min/maximization with under/overestimation}\label{sec:min2}

We first investigate the cost ratio for minimization problems for the case when $\beta>1$.
Similar bound was obtained by Kleinberg et al. (see~\cite[Theorem~2.1]{KleinbergOR16}). However, their theorem is about sophisticated agents and  cannot be applied in our case directly.

\begin{theorem}\label{thm:min-betamoreone} Solutions computed by present-biased agents satisfy the following: 
 For
 any minimization problem with $\beta>1$, the 
 cost ratio is at most~$\beta$. 
\end{theorem}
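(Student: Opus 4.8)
The plan is to track the biased cost $c_\beta(S_k)$ of the solution the agent selects at step $k$, show that it drops by at least $\w(x_k)$ at every step, and sum the resulting telescoping inequality. This will bound the total weight of the solution the agent returns by $c_\beta(S_0)$, and a one-line computation then bounds $c_\beta(S_0)$ by $\beta\cdot\opt$.

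First I would record an elementary inequality that uses $\beta>1$: for any feasible solution $S$ with minimum-weight element $x=\argmin_{y\in S}\w(y)$, we have $c_\beta(S)=\w(x)+\beta\,c(S\setminus\{x\})\le \beta\w(x)+\beta\,c(S\setminus\{x\})=\beta\,c(S)$. Applying this to the optimal solution $S_\opt$, which is feasible for $I_0=I$, and using that $S_0$ minimizes $c_\beta$ over the feasible solutions of $I_0$, I get $c_\beta(S_0)\le c_\beta(S_\opt)\le \beta\cdot\opt$.

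The crux is the per-step decrease. Fix a step $i$ at which the agent has not yet stopped. Since $S_i$ is feasible for $I_i$ and $I_{i+1}=I_i\setminus\{x_i\}$, the set $S_i\setminus\{x_i\}$ is feasible for $I_{i+1}$ by the downward-closure construction; and it is nonempty, because otherwise $\{x_0,\dots,x_i\}$ would already be feasible for $I$ (by upward closure of feasible solutions) and the agent would have stopped. Hence $c_\beta(S_{i+1})\le c_\beta(S_i\setminus\{x_i\})$. The elementary inequality above applied to $S_i\setminus\{x_i\}$ gives $c_\beta(S_i\setminus\{x_i\})\le \beta\,c(S_i\setminus\{x_i\})$, and since $x_i$ is the minimum-weight element of $S_i$ we have $\beta\,c(S_i\setminus\{x_i\})=c_\beta(S_i)-\w(x_i)$. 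Combining, $c_\beta(S_{i+1})\le c_\beta(S_i)-\w(x_i)$ for every step $i$ before the last.

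Finally I would telescope: if the agent stops after step $k$, then $c_\beta(S_k)\le c_\beta(S_0)-\sum_{i=0}^{k-1}\w(x_i)$, and since $c_\beta(S_k)\ge \w(x_k)\ge 0$, this rearranges to $\sum_{i=0}^{k}\w(x_i)\le c_\beta(S_0)\le \beta\cdot\opt$. The elements $x_0,\dots,x_k$ are pairwise distinct (each $x_i$ lies in a feasible solution of $I_i=I\setminus\{x_0,\dots,x_{i-1}\}$, which contains none of $x_0,\dots,x_{i-1}$), so the returned solution has cost exactly $\sum_{i=0}^{k}\w(x_i)\le\beta\cdot\opt$, i.e.\ $\varrho\le\beta$. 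The only points requiring care are checking at each step that $S_i\setminus\{x_i\}$ is genuinely an available competitor for $S_{i+1}$ (handled by the ``not yet stopped'' observation, which also rules out the degenerate case $S_i=\{x_i\}$) and keeping track of where $\beta>1$ is used to turn a leading coefficient $1$ into $\beta$ in the right direction; beyond that there are no nontrivial calculations.
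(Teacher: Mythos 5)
Your proof is correct, and it takes a genuinely different route from the paper's. The paper proves Theorem~\ref{thm:min-betamoreone} by switching to the Kleinberg--Oren task-graph formulation: it considers the weighted DAG of sub-instances and shows by backward induction over the DAG that $\cost^{\min}(v)\leq\beta\,c^{\min}(v)$ at every vertex~$v$, i.e.\ the agent's realized remaining cost is at most $\beta$ times the optimal remaining cost, the key step being $\w(vx^*)+\beta c^{\min}(x^*)\leq \w(vy)+\beta c^{\min}(y)\leq\beta c^{\min}(v)$ for $y$ on a shortest $v$-$t$ path. You instead stay entirely inside the set-system formalism of Section~\ref{sec:modelanddefinition} and run a forward telescoping argument on the potential $c_\beta(S_i)$: the inequality $c_\beta(S)\leq\beta\,c(S)$ (valid exactly because $\beta>1$), the feasibility of $S_i\smallsetminus\{x_i\}$ for $I_{i+1}$ (closed-downward property), and the identity $\beta\,c(S_i\smallsetminus\{x_i\})=c_\beta(S_i)-\w(x_i)$ (valid because $x_i$ is the minimum-weight element of $S_i$) give $c_\beta(S_{i+1})\leq c_\beta(S_i)-\w(x_i)$, and summing yields $c(\{x_0,\dots,x_k\})\leq c_\beta(S_0)\leq c_\beta(S_{\opt})\leq\beta\,\opt$; your handling of the degenerate case $S_i=\{x_i\}$ via the stopping rule and upward closure, and of distinctness of the $x_i$, is sound. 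The trade-off: the paper's DAG induction applies to the general graph-navigation agent (any outgoing edge may be chosen, with optimal-completion estimates), parallels the maximization proofs of Theorem~\ref{thm:min-max-beta}, and extends naturally to $\alpha$-approximate evaluations; your argument is more elementary, avoids justifying the translation to the graph model, exploits the specific structure of the Section~\ref{sec:modelanddefinition} agent (commitment to the minimum-weight element), and in fact gives the marginally sharper bound $c(S)\leq c_\beta(S_{\opt})=\beta\,\opt-(\beta-1)\w(x^*_{\opt})$.
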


\begin{proof}
For the proof of the theorem it is  convenient to switch to the original graph-theoretic model of Kleinberg and Oren~\cite{KleinbergO18}. Note that the task graphs corresponding to optimization problems are, in fact, directed acyclic graphs. Hence, we only consider task graphs of this type to avoid dealing with  paths of maximum length in the presence of cycles. 

Let $G$ be a directed acyclic graph (DAG) with a source $s$. Let also $\w\colon E(G)\rightarrow \mathbb{N}$ be a weight function. 
The aim of the agent is to go from the source $s$ to a sink $t$ of $G$ making present-biased decisions on each step. We assume that $G$ has an $s$-$t$ path. Let $\beta$ be a positive constant distinct from~1. 
Let $c^{\min}(x)$  be the minimum   length of an $x$-$t$ path.  

We suppose that the agent is equipped with an algorithm $\mathcal{A}$ that, given a vertex $v\in V(G)$, finds a vertex $x^*\in N_G^+(v)$ such that  
\[\w(vx^*)+\beta  \cdot  c^{\min}(x^*)=\min_{x\in N_G^+(v)}(\w(vx)+\beta  \cdot  c^{\min}(x)).\]
The agent constructs an $s$-$t$ path as follows:
if the agent occupies a vertex $v\neq t$, then he makes the  present-biased $\alpha$-approximate estimation  of the length of a shortest 
$v$-$t$ path  and moves to $x^*$. 
Note that since $G$ is a DAG, the agent would eventually  arrive to $t$. 

We denote by $\cost^{\min}(v)$  the length of a $v$-$t$ path constructed by the agent from $v$. Notice that this value is not uniquely defined as the agent may be able to choose distinct vertices that provide  $\alpha$-approximate present-biased evaluations but could give distinct lengths for the constructed paths.  
Then the proof of the theorem is implied by the following claim. 

\begin{claim-n}\label{lem:ratio-maxmin}
Let $G$ be a weighted DAG with a weigh function $\w\colon E(G)\rightarrow \mathbb{N}$ and a sink $t$. Then for every $v\in V(G)$,
  if $\beta>1$, then $\cost^{\min}(v)\leq \beta \cdot c^{\min}(v)$.
\end{claim-n}

The claim is trivial if $v=t$. Assume that $v\neq t$, and that the claim holds for every out-neighbor $x$ of $v$. Assume that 
$x^*\in N_G^+(v)$ is computed by $\mathcal{A}$, and let 
$$
y=\argmin_{x\in N_G^+(v)}(\w(vx)+c^{\min}(x)).
$$
 That is, there is a shortest $v$-$t$ path that goes through $y$.
 
By induction, we have that 
$$
\cost^{\min}(x^*)\leq\beta \cdot c^{\min}(x^*). 
$$
It follows that  
\begin{align*}
\cost^{\min}(v)=& \; \w(vx^*)+\cost^{\min}(x^*) 
\leq \;   \w(vx^*)+\beta \cdot c^{\min}(x^*)\\
\leq & \w(vx^*)+\beta \cdot c^{\min}(x^*) 
=   \min_{x\in N_G^+(v)}(\w(vx)+\beta  \cdot  c^{\min}(x))\\
\leq & \w(vy)+\beta  \cdot  c^{\min}(y)
\leq \; \beta \cdot\w(vy)+\beta  \cdot  c^{\min}(y)
\leq  \; \beta  \cdot  c^{\min}(v) .
\end{align*}
The last inequality completes the proof of the claim, and of the theorem. 
\end{proof}

Next, we  consider maximization problems. The formalism for these variants can be set up in a straightforward manner by adapting the framework displayed in Section~\ref{sec:modelanddefinition}. We establish the following worst-case bounds.  
 
\begin{theorem}\label{thm:min-max-beta} Solutions computed by present-biased agents satisfy the following: 
\begin{itemize}
\item[(i)] For 
any maximization problem with $\beta<1$, the cost ratio is at most~$\frac{1}{\beta}$;
 \item[(ii)]
 For 
 any maximization problem with $\beta>1$, the  cost ratio is at most~$\beta^c$, where $c\leq\opt$ is the cost of a solution constructed by the agent. 
\end{itemize}
\end{theorem}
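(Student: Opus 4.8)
The plan is to mirror the DAG-based argument used for Theorem~\ref{thm:min-betamoreone}, but adapted to maximization, where the relevant quantity at each vertex is the \emph{maximum} length of a $v$-$t$ path (since maximization problems with solutions closed upward correspond to inclusion-wise maximal feasible solutions, i.e.\ sink-bound paths in the task DAG). Concretely, for a vertex $v$ let $c^{\max}(v)$ denote the maximum length of a $v$-$t$ path, and let $\cost^{\max}(v)$ denote the length of a $v$-$t$ path the present-biased agent may actually build starting from $v$; the cost ratio is $c^{\max}(s)/\cost^{\max}(s)$, so it suffices to prove $c^{\max}(v)\le \tfrac1\beta\,\cost^{\max}(v)$ for $\beta<1$, and $c^{\max}(v)\le \beta^{c}\cdot\cost^{\max}(v)$ with $c\le\opt$ in the $\beta>1$ case. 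Both are by induction on the DAG structure (topological order), with the base case $v=t$ trivial.

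For part~(i), the inductive step proceeds exactly as in Claim~\ref{lem:ratio-maxmin} but with the inequalities reversed. Suppose the agent at $v$ moves to $x^*\in N_G^+(v)$ maximizing $\w(vx)+\beta\,c^{\max}(x)$, and let $y$ achieve $\max_{x}(\w(vx)+c^{\max}(x))$, so $c^{\max}(v)=\w(vy)+c^{\max}(y)$. Using $\beta<1$ one has $\w(vy)+c^{\max}(y)\le \tfrac1\beta(\w(vy)+\beta\,c^{\max}(y))\le \tfrac1\beta(\w(vx^*)+\beta\,c^{\max}(x^*))=\tfrac1\beta\w(vx^*)+c^{\max}(x^*)$, and then the induction hypothesis $c^{\max}(x^*)\le\tfrac1\beta\cost^{\max}(x^*)$ together with $\tfrac1\beta\ge 1$ gives $c^{\max}(v)\le \tfrac1\beta(\w(vx^*)+\cost^{\max}(x^*))=\tfrac1\beta\cost^{\max}(v)$. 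This is the clean, constant-ratio case and should go through with no surprises. (If $\alpha$-approximate evaluation is allowed one replaces the bound by $\alpha/\beta$ in the obvious way.)

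For part~(ii) the situation is genuinely different because $\beta>1$ works \emph{against} us in the step above: we can no longer absorb the edge weight $\w(vx^*)$ for free. The idea is instead to track how the discrepancy compounds multiplicatively per edge. I would set up the induction to show $c^{\max}(v)\le \beta^{\,c_v}\,\cost^{\max}(v)$, where $c_v$ is a measure of the cost (number of elements, or total ``combinatorial length'') of the agent's path from $v$ to $t$ — so that at the source $c_s\le\opt$. The inductive step should show that relaxing by one edge multiplies the factor by at most $\beta$ raised to the increment of $c_\cdot$ along that edge: starting from $c^{\max}(v)=\w(vy)+c^{\max}(y)\le\w(vy)+\beta^{c_y}\cost^{\max}(y)$, one needs to compare this with $\beta^{c_v}(\w(vx^*)+\cost^{\max}(x^*))$, using that $x^*$ maximizes the biased estimate, i.e.\ $\w(vx^*)+\beta\,c^{\max}(x^*)\ge \w(vy)+\beta\,c^{\max}(y)$. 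The main obstacle is getting the bookkeeping of $c_v$ right so that the edge-weight term $\w(vy)$ is also dominated — one wants something like $\w(vy)\le \beta^{c_v}\w(vx^*)$ or, more robustly, to bound $c^{\max}(v)$ directly by $\beta^{c_v}\cost^{\max}(v)$ by distributing a factor $\beta$ over each ``step'' of the optimal path; I would likely do this by first bounding each single edge weight on an optimal path in terms of quantities the agent controls (using that the agent's biased estimate of that edge dominates), and then summing, so that each of the $c_v$ units of cost contributes at most a factor $\beta$. Once the exponent is pinned to $\opt$ (or to the agent's own solution cost $c\le\opt$), Corollary~\ref{cor:upper} follows by plugging in the worst-case value of $c$ and optimizing $\beta^c$ against $c$, which yields the $(1+\log\beta)\,\opt/\log\opt$ shape stated in the table.
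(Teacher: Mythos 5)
Part (i) of your proposal is correct and is essentially the paper's own argument (the paper states it as $\cost^{\max}(v)\geq\beta\,c^{\max}(v)$ and bounds from the other side, but the chain of inequalities is the same): use $\beta<1$ to insert the biased value of $y$, invoke the biased optimality of $x^*$, then the induction hypothesis at $x^*$.

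Part (ii), however, is only a plan, and the route you sketch has a genuine gap. You propose to apply the induction hypothesis at $y$ (the neighbor on a longest path) and then ``compare'' $\w(vy)+\beta^{c_y}\cost^{\max}(y)$ with $\beta^{c_v}\bigl(\w(vx^*)+\cost^{\max}(x^*)\bigr)$ using the biased optimality of $x^*$ --- but that inequality relates the quantities $\w(v\cdot)+\beta\,c^{\max}(\cdot)$, not the agent's realized costs $\cost^{\max}(\cdot)$, so it cannot bridge the two sides; and your fallback idea of proving something like $\w(vy)\le\beta^{c_v}\w(vx^*)$, or of bounding edge weights of an optimal path one by one and summing, is not the mechanism that works (there is no useful per-edge comparison between the optimal path and the agent's path). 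The step you are missing is simpler and uses $\beta>1$ in your favor rather than against you: at $v$ write $c^{\max}(v)=\w(vy)+c^{\max}(y)\le\w(vy)+\beta\,c^{\max}(y)\le\w(vx^*)+\beta\,c^{\max}(x^*)$, so the induction hypothesis is needed only at $x^*$, the vertex the agent actually moves to. Taking the hypothesis in the form $c^{\max}(x^*)\le\cost^{\max}(x^*)\,\beta^{\cost^{\max}(x^*)}$ (exponent equal to the agent's own remaining cost), you get $c^{\max}(v)\le\w(vx^*)+\cost^{\max}(x^*)\,\beta^{\cost^{\max}(x^*)+1}\le\beta^{\cost^{\max}(x^*)+1}\bigl(\w(vx^*)+\cost^{\max}(x^*)\bigr)=\cost^{\max}(v)\,\beta^{\cost^{\max}(x^*)+1}$, where the factor in front of $\w(vx^*)$ is inserted for free precisely because $\beta>1$. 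Finally, the exponent is pinned down by integrality of the weights: $\w(vx^*)\ge 1$ gives $\cost^{\max}(x^*)+1\le\cost^{\max}(v)$, so $c^{\max}(v)\le\cost^{\max}(v)\,\beta^{\cost^{\max}(v)}$, and at the source this yields a ratio at most $\beta^{c}$ with $c$ the agent's solution cost. Note also that Corollary~\ref{cor:upper} is derived from the stronger inequality $\opt\le c\,\beta^{c}$, not merely from the ratio bound $\beta^{c}$, so you should keep the prefactor $\cost^{\max}(v)$ in the inductive statement.
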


\begin{proof}
As in the proof of Theorem~4, we 
  switch to the  graph-theoretic model of Kleinberg and Oren~\cite{KleinbergO18}. 
Let $G$ be a directed acyclic graph (DAG) with a source $s$ and weight function  $\w\colon E(G)\rightarrow \mathbb{N}$. 
The agent aims to go from the source $s$ to a sink $t$ of $G$ making present-biased decisions on each step. We assume that $G$ has an $s$-$t$ path. Let $\beta$ be a positive constant distinct from~1, and let 
    $c^{\max}(x)$ be the  maximum length of an $x$-$t$ path.  

Let $\alpha\in(0,1]$.  As in Theorem~4, we assume that the agent is equipped with an algorithm $\mathcal{A}$ that, given a vertex $v\in V(G)$, finds a vertex $x^*\in N_G^+(v)$ such that  
\[ \w(vx^*)+\beta\cdot c^{\max}(x^*)= \max_{x\in N_G^+(v)}(\w(vx)+\beta\,c^{\max}(x)).\] 
Using $\mathcal{A}$, the agent located in vertex  $v\neq t$
constructs an $s$-$t$ path as follows:
  the agent computes the  present-biased $\alpha$-approximate estimation  of the length of a   longest $v$-$t$ and moves to $x^*$. 
 We denote by   $\cost^{\max}(v)$ the length of a $v$-$t$ path constructed by the agent from $v$.

\begin{claim-n}\label{lem:ratio-maxmin2a}
Let $G$ be a weighted DAG with a weigh function $\w\colon E(G)\rightarrow \mathbb{N}$ and a sink $t$. Then for every $v\in V(G)$,
\begin{itemize}
\item[\rm (i)] if $\beta<1$, then $\cost^{\max}(v)\geq \beta\, c^{\max}(v) $,
\item[\rm (ii)] if $\beta>1$, then $c^{\max}(v)\leq  \cost^{\max}(v)\,{\beta}^{\cost^{\max}(v)}$.
\end{itemize}
\end{claim-n}
 As in Theorem~4, we prove the claim by induction. 
The claim is trivial if $v=t$. Assume that $v\neq t$, and that the claim holds for every out-neighbor $x$ of $v$. 
Let
$x^*$ be the vertex computed by $\mathcal{A}$ and let 
$$
y=\argmax_{x\in N_G^+(v)}(\w(vx)+c^{\max}(x)). 
$$
That is, there is a longest $v$-$t$ path that goes through $y$.

To show (i), we use the inductive assumption that 
$$\cost^{\max}(x^*)\geq  \beta\, c^{\max}(x^*).$$
We have 
\begin{align*}
\cost^{\max}(v)=& \; \w(vx^*)+\cost^{\max}(x^*) 
\geq \; \w(vx^*)+\beta\, c^{\max}(x^*)\\
= & \max_{x\in N_G^+(v)}(\w(vx)+\beta\,c^{\max}(x)) 
\geq \; \w(vy)+\beta\,c^{\max}(y)\\
\geq \; & \beta\,\w(vy)+\beta\,c^{\max}(y) 
\geq \; \beta\,c^{\max}(v).
\end{align*}

To prove (ii), we assume that 
the following inductive assumption holds: 
$$
c^{\max}(x^*)\leq  \cost^{\max}(x^*)\,\beta^{\cost^{\max}(x^*)}.
$$
It follows that
\begin{align*}
c^{\max}(v)=&\; \w(vy)+c^{\max}(y) 
\leq \; \w(vy)+\beta\cdot c^{\max}(y)\\
\leq &\; \max_{x\in N_G(v)}(\w(vx)+\beta\cdot c^{\max}(x))
\leq \;  \w(vx^*)+\beta\cdot c^{\max}(x^*) \\
\leq &\;  \w(vx^*)+\beta\cdot \cost^{\max}(x^*)\,\beta^{\cost^{\max}(x^*)}\\ 
\leq & \; \beta^{\cost^{\max}(x^*)+1}(\w(vx^*)+\cost^{\max}(x^*)) 
 =    \; \cost^{\max}(v)\,\beta^{\cost^{\max}(x^*)+1}\\
 \leq &\; \cost^{\max}(v)\,\beta^{\cost^{\max}(v)}.
\end{align*}
This last inequality completes the proof of Claim~\ref{lem:ratio-maxmin2a}, which immediately gives the bounds for the $\alpha$-approximate cost ratio claimed in the statement of the theorem.
\end{proof}

We also can write the bound for the cost ratio for $\beta>1$ in the following form to obtain the upper bound that depends only on the value of $\opt$.

\begin{corollary}\label{cor:upper} 
For any maximization problem with $\beta>1$, the   cost ratio is at most $(1+\log\beta)\frac{\opt}{\log\opt}$.
\end{corollary}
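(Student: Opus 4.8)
The plan is to start from the bound established in Theorem~\ref{thm:min-max-beta}(ii), namely that the cost ratio $\varrho_\alpha = c^{\max}(s)/\cost^{\max}(s)$ is at most $\beta^{\cost^{\max}(s)}$, and then combine it with the elementary linear bound on $\cost^{\max}(s)$ in terms of $\opt$. The key observation is that $\cost^{\max}(s) \le \opt$, since the solution constructed by the agent is feasible and $\opt$ is the cost of an optimal (maximum) solution; writing $c = \cost^{\max}(s)$, we thus have $\varrho_\alpha \le \beta^{c}$ with $c \le \opt$.

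The main step is then a purely analytic optimization: the function $f(c) = \beta^{c}/c$ (which measures how much ``slack'' we gain by writing the ratio relative to $\opt$ rather than relative to $c$) is not monotone, but since we only need an upper bound that holds for all $c \in \{1, \dots, \opt\}$, I would bound $\beta^{c} \le \beta^{\opt}$ directly when $c$ is large and argue more carefully when $c$ is small. More precisely, I would split on whether $c \ge \opt/\log\opt$ or not. If $c < \opt/\log\opt$, then $\beta^{c} < \beta^{\opt/\log\opt}$; taking logarithms, $\log \beta^{c} < \frac{\opt}{\log\opt}\log\beta$, and one checks this is at most $(1+\log\beta)\frac{\opt}{\log\opt}$ trivially (indeed $\log\beta \le 1 + \log\beta$). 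If $c \ge \opt/\log\opt$, then $\varrho_\alpha \le \beta^{\opt}/c \cdot c$, wait—this needs care; instead I would use $\varrho_\alpha \le \beta^{c} = c \cdot (\beta^{c}/c)$ and bound $\beta^{c}/c \le \beta^{\opt}/(\opt/\log\opt) $ only after checking $\beta^{c}/c$ is increasing for $c$ past $1/\log\beta$. The cleanest route is: for all $c$ with $1 \le c \le \opt$, either $\beta^c \le \opt$ (in which case $\varrho_\alpha \le \opt \le (1+\log\beta)\frac{\opt}{\log\opt}$ once $\log\opt \le 1+\log\beta$, else handled below), or $\beta^c > \opt$, which forces $c > \frac{\log\opt}{\log\beta}$, and then $\varrho_\alpha \le \beta^c$ with the exponent controlled via $\beta^c = \opt^{c\log\beta/\log\opt}$ combined with $c \le \opt$.

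Honestly, the crispest derivation is: from $\varrho_\alpha \le \beta^{c}$ and $c \le \opt$ we get $\log \varrho_\alpha \le \opt \log\beta$; separately, from $c \le \opt$ we get (via Theorem~\ref{theo:linearbound}-style reasoning or directly) $\varrho_\alpha \le \beta^c$, and we want to show $\min(\beta^{\opt}, \text{something})$ simplifies. The target $(1+\log\beta)\frac{\opt}{\log\opt}$ suggests optimizing $\max_{1\le c\le \opt} \beta^c$ subject to also knowing $\varrho_\alpha \le \opt/c \cdot (\text{agent's ratio at scale } c)$—the factor $\frac{\opt}{\log\opt}$ arises precisely as the value of $c$ that balances $\beta^{c}$ against $\opt/c$. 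I would therefore set $c = \frac{\opt}{\log\opt}$ as the threshold, observe $\beta^{c} = \beta^{\opt/\log\opt} = \opt^{\log\beta/\log\opt}\cdot(\dots)$—no. Cleanest: for $c \le \frac{\opt}{\log\opt}$, $\beta^c \le \beta^{\opt/\log\opt}$, and one verifies $\log(\beta^{\opt/\log\opt}) = \frac{\opt\log\beta}{\log\opt} \le \log\big((1+\log\beta)\frac{\opt}{\log\opt}\big)$ since the right side is roughly $\log\opt$; for $c > \frac{\opt}{\log\opt}$, use $\varrho_\alpha \le \beta^{\opt}$ is too weak, so instead invoke that the number of steps $k$ satisfies $k \le c \le \opt$ while the per-step gain is bounded, giving $\varrho_\alpha \le \opt/c$ is false for maximization—so I would instead carefully re-derive from the claim that $c^{\max}(v) \le \cost^{\max}(v)\beta^{\cost^{\max}(v)}$ the slightly stronger statement and plug in.

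I expect the main obstacle to be the bookkeeping in the analytic optimization: the inequality $\beta^{c} \le (1+\log\beta)\frac{\opt}{\log\opt}$ for all integers $1 \le c \le \opt$ is clean in spirit (the worst case is $c \approx \frac{\opt}{\log\opt}$, where $\beta^{c} \approx \opt^{\log\beta/\log\opt}$... actually the balance is between the exponential $\beta^c$ and the linear factor $c$ we can pull out in front), but getting the constants and the base of the logarithm to line up with the precise statement $(1+\log\beta)\frac{\opt}{\log\opt}$ requires checking that $\log$ denotes the natural logarithm throughout and tracking a $+1$ that comes from the $\cost^{\max}(x^*)+1$ exponent appearing in the proof of Claim~\ref{lem:ratio-maxmin2a}. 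Everything else — feasibility of the agent's solution giving $c = \cost^{\max}(s) \le \opt$, and the reduction to a one-variable inequality — is routine.
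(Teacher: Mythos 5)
There is a genuine gap: you never isolate the argument that actually yields the bound, and the concrete route you sketch contains false steps. For a maximization problem the cost ratio is by definition exactly $\varrho=\opt/c$, where $c$ is the cost of the agent's solution, and Theorem~\ref{thm:min-max-beta}(ii) says $\opt\le c\,\beta^{c}$. The corollary follows in one line by taking logarithms: $\log\opt\le \log c + c\log\beta\le (1+\log\beta)\,c$ (using $\log c\le c$), hence $c\ge \frac{\log\opt}{1+\log\beta}$, and therefore $\varrho=\frac{\opt}{c}\le (1+\log\beta)\frac{\opt}{\log\opt}$. In other words, the inequality $\opt\le c\beta^{c}$ must be used to \emph{lower-bound} $c$ in terms of $\opt$; the bound $c\le\opt$ that you lean on plays no role, and the worry about the exponent $\cost^{\max}(x^*)+1$ is moot since the claim is already stated with $\beta^{\cost^{\max}(v)}$.

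Your plan instead tries to prove that the intermediate upper bound $\beta^{c}$ is itself at most $(1+\log\beta)\frac{\opt}{\log\opt}$ for all $1\le c\le\opt$, which is a false statement (already for $c$ of order $\opt/\log\opt$ with $\beta=e$, $\beta^{c}=e^{\opt/\log\opt}$ vastly exceeds the target, even though the true ratio $\opt/c=\log\opt$ is tiny; for $c=\opt$ it is worse still). The case analysis inherits this error: in the regime $c\le\opt/\log\opt$ your claimed inequality $\frac{\opt\log\beta}{\log\opt}\le\log\big((1+\log\beta)\frac{\opt}{\log\opt}\big)$ fails badly for large $\opt$, and in the regime $c>\opt/\log\opt$ you discard the correct and trivial estimate by asserting that ``$\varrho\le\opt/c$ is false for maximization'' --- it is not false, it is the definition of the cost ratio, and it immediately gives $\varrho<\log\opt\le\frac{\opt}{\log\opt}\le(1+\log\beta)\frac{\opt}{\log\opt}$ there (since $(\log\opt)^2\le\opt$). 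Likewise your ``crispest derivation'' $\log\varrho\le\opt\log\beta$ only yields $\varrho\le\beta^{\opt}$, far weaker than the corollary. The missing idea is simply to read $\opt\le c\beta^{c}$ as a constraint forcing $c\ge\log\opt/(1+\log\beta)$ and then divide.
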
 

\begin{proof}
Let $c$ be the cost of a solution constructed by the agent. By Theorem~\ref{thm:min-max-beta}, $\opt\leq c\beta^c$. Therefore, 
$\log\opt\leq \log c+c\log\beta \leq \big(1+\log\beta)c,$
and
$\frac{\opt}{c}\leq (1+\log\beta)\frac{\opt}{\log\opt}.$
\end{proof}

For minimization problems with $\beta>1$, and maximization problems with $\beta<1$, we have that the cost ratio is bounded by a constant. This differs drastically with the case of maximization problems with $\beta>1$, when the cost ratio is still bounded but the bound is exponential. This exponential upper bound  is however essentially tight, in the sense that the exponent cannot be avoided.

\begin{theorem}\label{theo:lwbound} 
There are maximization problems for which a present-biased agent with $\beta>1$ returns a solution whose  cost ratio is at least~$\frac{1}{c}\,\beta^{c-1}$, where $c$ is the cost of the solution constructed by the agent. 
\end{theorem}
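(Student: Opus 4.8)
The plan is to exhibit, for every integer $c\ge 1$, a maximization task on which a present-biased agent with parameter $\beta>1$ collects total value exactly $c$ while the optimum value is at least $\beta^{c-1}$; the bound $\frac{1}{c}\beta^{c-1}$ then follows at once. As in the proofs of Theorems~\ref{thm:min-betamoreone} and~\ref{thm:min-max-beta}, I would present the construction in the graph-theoretic model, since the resulting DAG is directly a maximization task in the sense of Section~\ref{sec:min2}. The graph $G_c$ is the procrastination structure of Figure~\ref{fig:Akerlof}, but with weights tuned for overestimation of future \emph{gains}: its vertices are $s=v_0,v_1,\dots,v_{c-1}$ and a sink $t$; there is a ``continuation'' edge $(v_{i-1},v_i)$ of weight $1$ for $i=1,\dots,c-1$, a final edge $(v_{c-1},t)$ of weight $1$, and, for each $i=0,\dots,c-2$, a ``shortcut'' edge $(v_i,t)$ of weight $w_i:=\lceil\beta^{\,c-1-i}\rceil$, which is a positive integer because $\beta>1$.

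The routine part is the bookkeeping on the two relevant $s$-$t$ paths. The path $v_0,v_1,\dots,v_{c-1},t$ has length exactly $c$ (it uses $c-1$ continuation edges and the final edge, each of weight $1$); I will argue that this is the path the agent follows, so that $\cost^{\max}(s)=c$. On the other hand, the shortcut out of $s$ already gives $c^{\max}(s)\ge w_0=\lceil\beta^{c-1}\rceil\ge\beta^{c-1}$, hence $\opt\ge\beta^{c-1}$. Everything thus reduces to showing that the agent walks the whole continuation chain.

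This is the heart of the proof, and it is a single local comparison at each vertex $v_i$ with $0\le i\le c-2$: the shortcut has biased value $w_i+\beta\cdot c^{\max}(t)=w_i$, whereas the continuation edge has biased value $1+\beta\,c^{\max}(v_{i+1})$. The out-neighbour $v_{i+1}$ has a $v_{i+1}$-$t$ path of length at least $\beta^{\,c-2-i}$ when $i\le c-3$ (namely its own shortcut, of weight $\lceil\beta^{\,c-2-i}\rceil$) and of length at least $1$ when $i=c-2$ (the final edge). So the continuation value is at least $1+\beta\cdot\beta^{\,c-2-i}=1+\beta^{\,c-1-i}$ in the first case and at least $1+\beta$ in the second, and in both cases it is strictly larger than $w_i=\lceil\beta^{\,c-1-i}\rceil$ simply because $\lceil x\rceil<x+1$ for every real $x$. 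Therefore the agent strictly prefers the continuation at every $v_i$, reaches $t$ along the chain, $\cost^{\max}(s)=c$, and the cost ratio is $\varrho=\opt/\cost^{\max}(s)=c^{\max}(s)/c\ge\frac{1}{c}\beta^{c-1}$.

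I expect the only genuine difficulty to be calibrating the shortcut weights. They must grow geometrically towards the source so that $\opt$ is exponential in $c$, yet each one must still \emph{lose} locally against a continuation edge whose sole advantage is the additive $1$ of a unit step; the construction works precisely because this slack of $1$ is exactly what survives the rounding to integer weights, i.e. the inequality $\lceil x\rceil<x+1$. A minor separate case is the last internal vertex $v_{c-1}$, whose ``future'' is the unit final edge rather than another shortcut, handled by the boundary inequality $\lceil\beta\rceil<\beta+1$.
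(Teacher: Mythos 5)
Your DAG computation is internally correct: with shortcut weights $w_i=\lceil\beta^{\,c-1-i}\rceil$ the local comparison $1+\beta\,c^{\max}(v_{i+1})\geq 1+\beta^{\,c-1-i}>\lceil\beta^{\,c-1-i}\rceil$ (using $\lceil x\rceil<x+1$) does force the agent along the chain, giving $\cost^{\max}(s)=c$ against $\opt\geq\beta^{c-1}$, and your ceiling trick even handles every $\beta>1$, whereas the paper's construction assumes $\beta\geq 2$. Structurally this is the same idea as the paper's proof: a chain of unit-gain ``procrastination'' steps competing against geometrically decreasing one-shot jackpots, each jackpot losing locally by the additive slack of a unit step.

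The gap is the sentence ``the resulting DAG is directly a maximization task in the sense of Section~3.'' Theorem~\ref{theo:lwbound} asserts the existence of \emph{maximization problems} in the set-based framework of Section~\ref{sec:modelanddefinition} (ground set, additive weights, feasible solutions), and the reduction to DAGs in the proofs of Theorems~\ref{thm:min-betamoreone} and~\ref{thm:min-max-beta} goes only one way: every task yields a DAG, but an arbitrary weighted DAG is not automatically the task graph of such a problem. If you encode your graph as a set system (unit elements $e_1,\dots,e_c$, shortcut elements $f_0,\dots,f_{c-2}$, maximal feasible sets $\{e_1,\dots,e_i,f_i\}$ and $\{e_1,\dots,e_c\}$), the agent's step is no longer the two-edge comparison at $v_i$: it optimizes the biased value over \emph{all} remaining feasible solutions, with freedom in which element is performed now, and under the literal rule of the maximization scenario (commit to the maximum-weight element of the chosen solution) this encoding is in fact benign --- the agent grabs $f_0$ immediately and is optimal. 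So an extra argument about the set-level dynamics is needed, and it is missing. The paper sidesteps this by exhibiting a concrete problem: weighted maximum independent set on a clique $x_0,\dots,x_k$ with weights $\beta^i$ plus unit-weight vertices $y_1,\dots,y_k$, where $y_i$ is adjacent to $x_i,\dots,x_k$, so that the feasibility structure itself reduces every step to exactly the binary ``take $x_j$ and stop, or take $y_j$ and continue'' choice (a knapsack variant is given in the remark). Either adopt such a concrete realization, or add the missing verification that in your set encoding the agent's chosen solution always contains a unit element and that it commits to it, so that it never stops before completing the chain.
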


\begin{proof}
Let us consider the maximum independent set problem. In this problem, we are given a weighted graph $G$, and the task is to find an independent set of maximum weight. Let $k$ be a positive integer. We construct the graph $G_k$ as follows (see Fig.~\ref{fig:Gk}):
\begin{itemize}
\item construct $k+1$ vertices $x_0,\ldots,x_k$, and make them pairwise adjacent,
\item construct $k$ vertices $y_1,\ldots,y_k$,
\item for each $i\in\{1,\ldots,k\}$, make $y_i$ adjacent to $x_i,x_{i+1},\ldots,x_k$.
\end{itemize}
To define the weights, let $\beta\geq 2$. We set $\w(y_i)=1$ for every $i\in\{1,\ldots,k\}$, and
$\w(x_i)=\beta^i$ for every $i\in\{0,\ldots,k\}$.

\begin{figure}[h]
\begin{center}
\scalebox{0.8}{\input{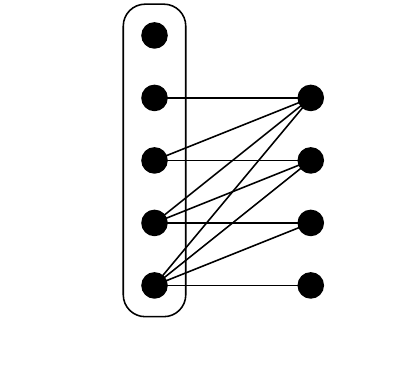_t}}
\end{center}
\vspace*{-2ex}
\caption{Construction of $G_k$ for $k=4$.}\label{fig:Gk}
\end{figure}

Since $X=\{x_0,\ldots,x_k\}$ is a clique,  any independent set has at most one vertex in $X$. Therefore, the family of sets 
$$
S_i=\{x_i\}\cup\{y_{i+1},\ldots,y_{k}\}
$$ 
for $i\in\{0,\ldots,k\}$ form the family of maximal independent sets. Because $\beta\geq 2$, it is straightforward to verify that the single-vertex set $S_k=\{x_k\}$ is an independent set of maximum weight  $\beta^k$, that is, $\opt=\beta^k$. Observe that the biased cost of this set is $\beta^k$ as well. 

From the other side, the biased cost of $S_{k-1}$ is 
$$
\w(y_k)+\beta\cdot \w(x_{k-1})=1+\beta\cdot \beta^{k-1}=1+\beta^k>\beta^k.
$$
Hence, the agent would prefer to select $y_k$ at the first iteration. At the next iteration, the agent considers the graph obtained from $G_k$ by the deletion of $y_k$ and its neighborhood, that is, $G_{k-1}$. Applying the same arguments inductively, we conclude that the agent will end up with the set $S_0=\{x_0,y_1,\ldots,y_k\}$ with $\w(S_0)=k+1$.
We obtain that $\opt=\beta^{c-1}$, where $c$ is the  cost of a solution constructed by the agent. \qed
\end{proof}

\paragraph{Remark.} An example similar to the one in the proof of Theorem~\ref{theo:lwbound} can be constructed for the knapsack problem. Recall that in this problem, we are given $n$ objects with positive integer \emph{values}~$v_i$, and \emph{weights}~$w_i$, for $i\in\{1,\ldots,n\}$, and $W\in\mathbb{N}$. the task is to find a set of objects $S\subseteq\{1,\ldots,n\}$ of maximum value with the total weight at most $W$. Let $k$ be a positive integer. We let $n=2k+1$, $W=n$ and $\beta\geq 2$. We define 
$$
v_i=\beta^{k+1-i}, 
$$
and 
$$
w_i=W-(i-1)
$$ 
for every $i\in\{1,\ldots,k+1\}$, and we set $v_i=1$ and $w_i=1$ for $i\in\{k+2,\ldots,n\}$. Using the same arguments as for the maximum independent set problem, we obtain that the optimum solution has cost $\beta^k$ while a present-biased agent would select a solution of cost $k+1$. 

\section{Covering and hitting problems}\label{sec:hitting}

In Section~\ref{subsec:worst_case}, we have seen several instances of the set-cover problem whose cost ratio cannot be bounded by any  function of \opt. The same obviously holds for the hitting-set problem. Recall that an instance  of hitting-set is defined by a collection $\Sigma$ of subsets of a finite set $V$, and the objective is to find the subset $S \subseteq V$ of minimum size, or minimum weight,  which intersects (hits) every set in~$\Sigma$. However, set-cover problems, and hitting set problems behave differently when the sizes of the sets are bounded. First, we consider the $d$-set cover problem. 

\paragraph{\bf The $d$-set cover problem.}
Let $d$ be a positive integer. The task of the  $d$-set cover problem is, given a collection $\Sigma$ of subsets with size at most~$d$ of a finite set $V$, and given a weight function $\w\colon \Sigma\rightarrow \mathbb{N}$, find a set $S\subseteq \Sigma$ of minimum weight that covers $V$, that is, $\bigcup_{X\in S}X=V$. 

\begin{theorem}\label{d-cover}
Let $\alpha\geq 1$.
For any instance of the $d$-set-cover problem, the $\alpha$-approximative cost ratio is at most $\alpha \cdot d\cdot \opt$. 
\end{theorem}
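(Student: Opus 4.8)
The plan is to convert the bound on the cost ratio into a bound on the number of steps the agent makes, and then bound that number by $d\cdot\opt$. Theorem~\ref{theo:linearbound} already gives $\varrho_\alpha\le\alpha\cdot k$, where $k$ is the number of elements the agent commits, so it suffices to show that the agent performs at most $d\cdot\opt$ steps. I would get this from two estimates: (a) the agent makes at most $|V|$ steps, and (b) $|V|\le d\cdot\opt$.

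Estimate (b) is immediate. Let $S_{\opt}\subseteq\Sigma$ be an optimal cover, so $\bigcup_{X\in S_{\opt}}X=V$ and $\opt=\sum_{X\in S_{\opt}}\w(X)$. Every $X\in S_{\opt}$ has $|X|\le d$, whence $|V|\le\sum_{X\in S_{\opt}}|X|\le d\,|S_{\opt}|$; and since the weights are positive integers, $|S_{\opt}|\le\sum_{X\in S_{\opt}}\w(X)=\opt$. Therefore $|V|\le d\cdot\opt$.

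Estimate (a) is the real content: I would show that every step covers at least one element of $V$ not covered before. Write $C_i=x_0\cup\dots\cup x_{i-1}$ for the set covered after $i$ steps and $V_i=V\smallsetminus C_i$ for the part still to be covered at step~$i$; since the agent has not stopped, $V_i\neq\varnothing$. By downward closure, $I_i$ is exactly the $d$-set-cover instance of covering $V_i$ with the sets of $\Sigma\smallsetminus\{x_0,\dots,x_{i-1}\}$, so the feasible solution $S_i$ the agent computes is a cover of $V_i$, and, the agent avoiding superfluous choices, a \emph{minimal} one. Minimality forces every $X\in S_i$ to meet $V_i$, since otherwise $S_i\smallsetminus\{X\}$ would still cover $V_i$; in particular the committed element $x_i=\argmin_{y\in S_i}\w(y)$ satisfies $x_i\cap V_i\neq\varnothing$, so $C_{i+1}\supsetneq C_i$. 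As $C_i\subseteq V$ at every step, the agent runs for at most $|V|$ steps, and combining with (b) and Theorem~\ref{theo:linearbound} yields $\varrho_\alpha\le\alpha\cdot|V|\le\alpha\cdot d\cdot\opt$.

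The step I expect to need the most care is the minimality of $S_i$ — this is precisely what makes each commitment bring in a new element — and it is where one must rely on the agent being restricted to minimal feasible solutions, as in Section~\ref{subsec:worst_case}. Without that restriction estimate (a) breaks: a cheap but already redundant set can \emph{decrease} the biased cost by anchoring its undiscounted term, so an unrestricted agent may commit useless sets at many consecutive steps, and then the ratio is not bounded by any function of $d$ and $\opt$, even for $d=1$. Once minimality is assumed the rest is routine, and the instance $I_{SC}^{(n)}$ of Section~\ref{subsec:worst_case} suggests that the slack $|V|\le d\cdot\opt$ accounts for essentially all the looseness in the bound.
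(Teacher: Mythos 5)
Your proposal is correct and follows essentially the same route as the paper's proof: each committed set has weight at most $\alpha\cdot\opt$ (Lemma~\ref{lem:biased}(i), which is also what drives Theorem~\ref{theo:linearbound}), the non-superfluous restriction forces each step to cover a fresh element so the number of steps is at most $|V|$, and $|V|\le d\cdot\opt$ since $\opt\ge |V|/d$. The only difference is presentational — you make explicit the minimality argument behind the paper's ``clearly $p\le n$'' and its necessity (your $d=1$ caveat is indeed consistent with the paper's assumption that superfluous choices are avoided) — so there is nothing to add.
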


\begin{proof}
Let $I=(\Sigma,V,\w)$ be an instance of the $d$-set cover problem. 
Let $|V|=n$. Denote by $S_1,\ldots,S_p$ a sequence of solutions computed by the present-biased agent avoiding superfluous solutions, and let $S=\{X_1,\ldots,X_p\}$ be the obtained solution for  $I$. That is,  $S$ is a set cover such that  
$$
X_i=\argmin_{Y\in S_i}\w(Y)
$$ 
for $i\in\{1,\ldots,p\}$.  Clearly, $p\leq n$. Note that for each iteration $i\in\{1,\ldots,p\}$, the agent considers the instance 
$I_i=(\Sigma_i,V_i,\w_i)$, where, as superfluous solutions are avoided,
\[
\begin{array}{lcl}
V_i & = & V\setminus (\bigcup_{j=1}^{i-1}X_i), \\
\Sigma_i& =& \{X\cap V_i\mid X\in \Sigma\setminus\{X_1,\ldots,X_{i-1}\}\},\\
\w_i& =& \w|_{\Sigma_i}.
\end{array}
\]
We have that, for every $i\in\{1,\ldots,p\}$, 
$$
\opt=\opt(I)=\opt(I_1)\geq\cdots\geq\opt(I_p), 
$$
and, by Lemma~1(ii), 
$$
\w(X_i)\leq\alpha\,\opt(I_i). 
$$ 
Since each set of $\Sigma$ covers at most $d$ elements of $V$, $\opt(I)\geq n/d$. Therefore
$$
c(S)=\sum_{i=1}^p\w(X_i)\leq \alpha\, \opt(I)n\leq \alpha d\,\opt(I)^2.
$$
It follows that the cost ratio is at most $\alpha d\,\opt$.
\end{proof}

\paragraph{\bf The $d$-hitting set problem. } Let $d$ be a positive integer. We are given a  collection $\Sigma$ of subsets with size $d$ of a finite set $V$, a weight function $\w\colon V\rightarrow \mathbb{N}$. The task is to find a set $S\subseteq V$ of minimum weight that hits every set of $\Sigma$. 

We are using the classical \emph{sunflower lemma} of Erd\H{o}s and Rado~\cite{ErdosR60}. We state this result in the form given in~\cite{CyganFKLMPPS15}. A \emph{sunflower} with $k$ \emph{petals} and  a \emph{core} $X$ is a collection of pairwise distinct sets $S_1,\ldots,S_k$ such that $S_i\cap S_j=X$ for all distinct $i,j\in\{1,\ldots,k\}$. Note that the core may be empty, that is, a collection of $k$ pairwise disjoint sets is a sunflower. 

\begin{lemma}[{Sunflower lemma~\cite{ErdosR60}}]\label{lem:sunflower}
Let $\mathcal{A}$ be a family of pairwise distinct sets over a universe $U$ such that for every $A\in\mathcal{A}$, $|A|=d$. If $|\mathcal{A}|>d!(k-1)^d$, then $\mathcal{A}$ contains a sunflower with $k$ petals. 
\end{lemma}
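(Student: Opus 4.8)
The plan is to prove the statement by induction on $d$, following the classical argument of Erd\H{o}s and Rado. For the base case $d=1$, the family $\mathcal{A}$ consists of pairwise distinct singletons; if $|\mathcal{A}| > (k-1)$, then $|\mathcal{A}| \geq k$, and any $k$ of these singletons are pairwise disjoint, hence form a sunflower with $k$ petals and empty core. So the base case is immediate.

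For the inductive step, assume $d \geq 2$ and that the statement holds for families of $(d-1)$-element sets, and let $\mathcal{A}$ be a family of pairwise distinct $d$-element subsets of $U$ with $|\mathcal{A}| > d!\,(k-1)^d$. First I would extract a maximal subcollection $A_1,\dots,A_t \in \mathcal{A}$ of pairwise disjoint sets. If $t \geq k$, these sets already constitute a sunflower with $k$ petals and core $\varnothing$, and we are done. So assume $t \leq k-1$ and put $Y = A_1 \cup \cdots \cup A_t$, so that $|Y| \leq d(k-1)$. By the maximality of the disjoint subcollection, every set of $\mathcal{A}$ meets $Y$.

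Now apply the pigeonhole principle: some element $x \in Y$ lies in at least $|\mathcal{A}|/|Y| \geq |\mathcal{A}|/(d(k-1)) > d!\,(k-1)^d/(d(k-1)) = (d-1)!\,(k-1)^{d-1}$ sets of $\mathcal{A}$. Writing $\mathcal{A}_x = \{A \in \mathcal{A} : x \in A\}$, consider the family $\mathcal{A}' = \{A \setminus \{x\} : A \in \mathcal{A}_x\}$ of $(d-1)$-element sets; these are pairwise distinct because all members of $\mathcal{A}_x$ contain $x$, and $|\mathcal{A}'| = |\mathcal{A}_x| > (d-1)!\,(k-1)^{d-1}$. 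By the induction hypothesis, $\mathcal{A}'$ contains a sunflower $S_1',\dots,S_k'$ with $k$ petals and some core $X'$. Then $S_1' \cup \{x\},\dots,S_k' \cup \{x\}$ is a sunflower in $\mathcal{A}$ with $k$ petals and core $X' \cup \{x\}$, completing the induction.

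The argument is essentially routine; the only genuine ideas are (a) taking a maximal pairwise-disjoint subfamily so that its union is an absorbing ``hitting'' set of bounded size, and (b) using pigeonhole to find a heavily used element and recursing on its link. The one point requiring care is the bookkeeping of the numerical bounds --- in particular verifying $|Y| \leq d(k-1)$ and that $d!\,(k-1)^d/(d(k-1))$ is exactly $(d-1)!\,(k-1)^{d-1}$ --- so that the threshold in the induction hypothesis is met precisely at each step of the recursion.
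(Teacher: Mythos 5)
Your proof is correct: the base case, the maximal disjoint subfamily, the pigeonhole bound $|\mathcal{A}_x|>\,(d-1)!\,(k-1)^{d-1}$, and the lifting of the sunflower in the link of $x$ back to $\mathcal{A}$ all check out. The paper does not prove this lemma at all --- it simply quotes it from Erd\H{o}s and Rado~\cite{ErdosR60} in the form given in~\cite{CyganFKLMPPS15} --- and your argument is exactly the classical induction on $d$ found in those sources, so there is nothing to reconcile.
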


\begin{theorem}\label{d-hitting}
Let $\alpha\geq 1$.
For any instance of the $d$-hitting-set problem, the $\alpha$-approximative cost ratio is at most $\alpha d!\,(\frac{\alpha}{\beta}\opt)^{d}$. 
\end{theorem}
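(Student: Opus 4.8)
The plan is to bound the number of steps $p$ performed by the present-biased agent on a $d$-hitting-set instance, and then invoke Theorem~\ref{theo:linearbound} together with Lemma~\ref{lem:biased}. First I would set up notation as in the proof of Theorem~\ref{d-cover}: let $I=(\Sigma,V,\w)$ be the instance, let $S=\{x_1,\dots,x_p\}$ be the solution returned by the agent, and let $I_i$ be the sub-instance faced at step $i$, where the collection $\Sigma_i$ consists of those sets of $\Sigma$ not yet hit by $\{x_1,\dots,x_{i-1}\}$ (since we restrict to non-superfluous choices, each $x_i$ hits at least one set of $\Sigma_i$, so $\Sigma_{i+1}\subsetneq\Sigma_i$). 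By monotonicity of the optima, $\opt=\opt(I_1)\ge\cdots\ge\opt(I_p)$, and by Lemma~\ref{lem:biased}(i), $\w(x_i)\le\alpha\,\opt(I_i)\le\alpha\,\opt$ for every $i$; so by Theorem~\ref{theo:linearbound} it suffices to show $p\le d!\,(\tfrac{\alpha}{\beta}\opt)^d$, which then yields $c(S)=\sum_i\w(x_i)\le\alpha p\,\opt\le\alpha d!\,(\tfrac{\alpha}{\beta}\opt)^{d}\,\opt$ and hence cost ratio at most $\alpha d!\,(\tfrac{\alpha}{\beta}\opt)^{d}$.

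The heart of the argument is the bound on $p$. The key observation is that the sets $A_i\in\Sigma$ that $x_i$ is chosen to hit (one per step, witnessing non-superfluousness) cannot form a large sunflower. Suppose for contradiction that among $A_1,\dots,A_p$ there is a sunflower $A_{i_1},\dots,A_{i_k}$ with $k$ petals and core $X$, with $i_1<\cdots<i_k$. I want to argue that this forces the agent, at step $i_1$ (or at some well-chosen earlier step, whichever is convenient), to have available a feasible solution of small biased cost that hits all these sets at once — essentially, hitting the core $X$, or, if $X=\varnothing$, exploiting that the petals are disjoint — and conclude that $\opt(I_{i_1})$, hence $\opt$, must be at least roughly $k\beta/\alpha$ (or that the agent would have made a different choice). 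Turning this around: if all of $A_1,\dots,A_p$ are pairwise distinct (which non-superfluousness can be arranged to give, by picking for each step a newly-hit set), then the family $\{A_1,\dots,A_p\}$ over the universe $V$ is a family of $d$-element sets; by the sunflower lemma (Lemma~\ref{lem:sunflower}) with $k=\lfloor\tfrac{\alpha}{\beta}\opt\rfloor+1$, if $p>d!\,(k-1)^d=d!\,(\tfrac{\alpha}{\beta}\opt)^{d}$ then such a sunflower exists, contradicting the bound $\opt\ge k\beta/\alpha$ we derived. Hence $p\le d!\,(\tfrac{\alpha}{\beta}\opt)^{d}$.

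The step I expect to be the main obstacle is making precise the claim that a $k$-petal sunflower among the greedily-hit sets forces $\opt$ (or some $\opt(I_i)$) to be at least about $k\beta/\alpha$. One needs to pin down exactly which feasible solution of the relevant sub-instance has small biased cost: hitting the common core $X$ deals with all petals simultaneously with a single element, while a sunflower with empty core consists of pairwise disjoint $d$-sets, any hitting set of which already has $\ge k$ elements in that region — so one has to split into these two cases and in each relate the situation back to a bound on $\opt(I_i)$ via Lemma~\ref{lem:biased}(ii) (which gives $c(S_i)\le\tfrac{\alpha}{\beta}\opt(I_i)$, and $S_i$ must hit every petal, so in the disjoint-core case $|S_i|\ge k$, forcing $k\le\tfrac{\alpha}{\beta}\opt$). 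Care is also needed to ensure the $A_i$ can be taken pairwise distinct so the sunflower lemma applies cleanly, and to track the indices $i_1<\cdots<i_k$ so that the sub-instance used is genuinely one the agent faced; once these bookkeeping points are settled the inequalities are routine.
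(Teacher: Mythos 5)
Your overall architecture matches the paper's: attach to each step a witness set from $\Sigma$, apply the sunflower lemma (Lemma~\ref{lem:sunflower}) with $k=\lfloor\frac{\alpha}{\beta}\opt\rfloor+1$ to bound the number of steps $p$ by $d!\,(\frac{\alpha}{\beta}\opt)^d$, and conclude via $\w(x_i)\le\alpha\,\opt$ (Lemma~\ref{lem:biased}(i)) that the ratio is at most $\alpha p$. Your empty-core case is also fine: the $k$ petals are pairwise disjoint and all lie in $\Sigma_{i_1}$, so they force either $\opt(I_{i_1})\ge k$ or, as you argue, $\w(S_{i_1})\ge k$ with $\w(S_{i_1})\le\frac{\alpha}{\beta}\opt$ by Lemma~\ref{lem:biased}(ii), contradicting the choice of $k$.

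The nonempty-core case, which you yourself flag as the main obstacle, is a genuine gap, and the repair is not of the kind you sketch. A sunflower with nonempty core does \emph{not} force $\opt(I_{i_1})$ (or $\opt$) to be of order $k\beta/\alpha$: a single core element of weight $1$ hits all $k$ petals, so the optimum can stay constant while the number of petals grows; the quantitative contradiction you aim for is simply unavailable here. The paper's contradiction is structural and rests on two ingredients missing from your proposal. First, the witness $X_i$ must be \emph{private} with respect to the entire biased solution $S_i$ chosen at step $i$, i.e.\ $X_i\in\Sigma_i$ and $S_i\cap X_i=\{v_i\}$; such a set exists because $S_i$ is a \emph{minimal} hitting set of $\Sigma_i$, whereas a merely ``newly-hit'' set, as in your sketch, gives distinctness but nothing more. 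Second, one proves that every hitting set $R$ of $\Sigma_{h_1}$ with $\w(R)\le\frac{\alpha}{\beta}\opt$ must meet the core $Y$ (otherwise $R$ needs $k$ distinct elements, one per petal outside $Y$, of total weight at least $k>\frac{\alpha}{\beta}\opt$); by Lemma~\ref{lem:biased}(ii) this applies to $S_{h_1}$, so $S_{h_1}\cap Y\neq\emptyset$. On the other hand, $Y\subseteq X_{h_1}$ and privateness give $S_{h_1}\cap X_{h_1}=\{v_{h_1}\}$, while $v_{h_1}\notin X_{h_2}$ because $X_{h_2}$ is still unhit at the later step $h_2>h_1$; hence $v_{h_1}\notin Y$ and $S_{h_1}\cap Y=\emptyset$, the desired contradiction. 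Without the privateness of the witnesses and the observation $v_{h_1}\notin Y$, your argument cannot close this case, so as written the proof is incomplete.
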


\begin{proof}
Let  $I=(\Sigma,V,\w)$ be an instance of the $d$-hitting set problem.  
Denote by $S_1,\ldots,S_p$ a sequence of solutions computed by the present-biased agent avoiding superfluous solutions, and let $S=\{v_1,\ldots,v_p\}$ be the obtained solution for  $I$, that is,  $S\subseteq V$ is a hitting set  such that  
$$
v_i=\argmin_{v\in S_i}\w(v)
$$
for every $i\in\{1,\ldots,p\}$. Since the agent avoids  superfluous solutions, the agent considers the instance $I_i=(\Sigma_i,V_i,\w_i)$ at each iteration $i\in\{1,\ldots,p\}$, where 
\[
\begin{array}{lcl}
V_i & = & V\setminus\{v_1,\ldots,v_{i-1}\},\\
\Sigma_i & = & \{X\in\Sigma\mid X\cap \{v_1,\ldots,v_{i-1}\}=\emptyset\}, \\
\w_i&=&\w|_{V_i}. 
\end{array}
\]
Let $i\in\{1,\ldots,p\}$. Since $S_i$ is a minimal hitting set for $\Sigma_i$, there is $X_i\in \Sigma_i$ such that $v_i\in X_i$, and, for every $v\in S_i\setminus\{v_i\}$, $v\notin X_i$. We say that $X_i$ is a \emph{private} set for $v_i$. Observe that 
$$
\opt=\opt(I)=\opt(I_1)\geq\cdots\geq \opt(I_p)
$$ 
by the construction of the instances. The following claim is crucial for the prof of the theorem.

\begin{claim-n}\label{cl:step-a}
$p \leq d!\,(\frac{\alpha}{\beta}\opt)^d$.
\end{claim-n}

Let us assume, for the purpose of contradiction, that $p> d!\,(\frac{\alpha}{\beta}\opt)^d$. Consider the private sets $X_1,\ldots,X_p\in\Sigma$ for $v_1,\ldots,v_p$, respectively, and let $\mathcal{A}=\{X_1,\ldots,X_p\}$. Note that $X_1,\ldots,X_p$ are pairwise distinct, since $X_h\in \Sigma_h$ and $X_h\notin \Sigma_{h+1}$ for every $h\in\{1,\ldots,p-1\}$. Let 
$$
k= \Big\lfloor \frac{\alpha}{\beta}\opt  \Big \rfloor +1\geq 2.
$$ We have that $|X_h|=d$ for every $X_h\in\mathcal{A}$, and $|\mathcal{A}|>d!\,(k-1)^d$. Hence, $\mathcal{A}$ contains a sunflower with $k$ petals by Lemma~\ref{lem:sunflower}. 
Denote by $X_{h_1},\ldots,X_{h_k}$ the sets of the sunflower, and let $Y$ be its core. 

Suppose that $Y=\emptyset$. Then, $X_{h_1},\ldots,X_{h_k}$ are pairwise disjoint. Therefore, 
$$
\opt(I)\geq k=\Big\lfloor \frac{\alpha}{\beta}\opt  \Big\rfloor+1>\opt, 
$$
which is a contradiction. Therefore, $Y\neq\emptyset$.

We show that, for every hitting set $R$ for $\Sigma_{h_1}$ of weight at most $ \frac{\alpha}{\beta}\opt$, $R\cap Y\neq\emptyset$. Assume that this is not the case, that is, $R\cap Y=\emptyset$. Since $R$ is a hitting set, there exists
$$
u_\ell\in X_{h_\ell}\setminus Y
$$
such that $u_\ell\in R$ for every $\ell\in\{1,\ldots,k\}$. Because $\{X_{h_1},\ldots,X_{h_k}\}$ is a sunflower, $u_1,\ldots,u_k$ are distinct. It follows that
$$
\w(R)\geq \sum_{\ell=1}^k\w(u_\ell)\geq k=\Big\lfloor \frac{\alpha}{\beta}\opt \Big\rfloor+1>\frac{\alpha}{\beta}\opt.$$
The latter strict inequality is contradicting the fact that the weight of $R$ is at most $ \frac{\alpha}{\beta}\opt$. We conclude that $R\cap Y\neq\emptyset$.

Recall that $S_{h_1}$ is a feasible solution for $I_{h_1}$, and, by Lemma~1 (ii), 
$$
\w(S_{h_1})\leq  \frac{\alpha}{\beta}\opt(I_{h_1})\leq  \frac{\alpha}{\beta}\opt. 
$$
Then $S_{h_1}\cap Y\neq\emptyset$.
The set $X_{h_1}$ was chosen to be a private set for $v_{h_1}$, that is, $S_{h_1}\cap X_{h_1}=\{v_{h_1}\}$. Note that $v_{h_1}\notin X_{h_2}$. Hence, $v_{h_1}\notin Y$, and thus $S_{h_1}\cap Y=\emptyset$. This is a contradiction, which completes the proof of the claim.

By  Claim~\ref{cl:step-a}, $p\leq d!\,(\frac{\alpha}{\beta}\opt)^d$. By Lemma~1 (i),
$$
\w(v_i)\leq \alpha\, \opt(I_i)\leq \alpha\,\opt
$$
for every $i\in\{1,\ldots,p\}$. Therefore, 
$$
c(S)=\sum_{i=1}^p\w(v_i)\leq \alpha\, \opt\,p\leq \alpha\,\opt\,d!\,(\frac{\alpha}{\beta}\opt)^d.
$$
It follows that $\frac{c(S)}{\opt}\leq \alpha d!\,(\frac{\alpha}{\beta}\opt)^{d}$.  

Let $S=\{x_1,\dots,x_n\}$ be the final solution selected by the present-biased agent for~$I$, and let~$\w$ be the weight function on the set $\m{S}_I$ associated to~$I$. We have $\sum_{i=1}^n\w(x_i) = \alpha\,\opt(I)$. For every $i\in\{1,\dots,n\}$, let us denote by $\opt(I\smallsetminus\{x_1,\dots,x_i\})$ the cost of an optimal solution for the instance $I\smallsetminus\{x_1,\dots,x_i\}$, and by $S_{ \opt(I\smallsetminus\{x_1,\dots,x_i\})}$ a corresponding optimal solution. For $i=0$, $S_{ \opt(I\smallsetminus\{x_1,\dots,x_i\})}$ is an optimal solution for $I$. For $i=1,\dots,n$, we define 
$$
\bar{x}_i = \{x_i\}, 
\;
\mbox{and}
\;
\bar{y}_i=S_{ \opt(I\smallsetminus\{x_1,\dots,x_{i-1}\})}.
$$
Let $J$  be the instance with ground set 
$
\{\bar{x}_1,\dots,\bar{x}_n\}\cup\{\bar{y}_1,\dots,\bar{y}_n\},
$
and feasible solutions 
$$
\{\bar{y}_1\}, \{\bar{x}_1,\bar{y}_2\},\dots,\{\bar{x}_1,\dots,\bar{x}_{n-1},\bar{y}_n\}, \{\bar{x}_1,\dots,\bar{x}_n\}.
$$
Note that $\bar{x}_i \neq \bar{x}_j$ for every $i\neq j$, because $x_i\neq x_j$  for every $i\neq j$. Also, for every $i\in\{1,\dots,n-1\}$ and $j\in\{1,\dots,n\}$, $\bar{x}_i \neq \bar{y}_j$, because otherwise the sequence constructed by the time-biased agent for~$I$ would stop at~$x_k$  with $k<n$. Therefore, we have $J=J^{(n)}$, and, since $\w(x_i)\leq \opt(I)$ for every $i=1,\dots,n$, $n\geq \alpha$ holds. 

For analyzing the behavior of a present-biased agent with parameter~$\beta$ acting on~$J$, let us assume that $k$ steps were already performed by the agent, with $0\leq k < n$, resulting in constructing the partial solution $\{\bar{x}_1,\dots,\bar{x}_k\}$. (For $k=0$, this partial solution is empty). The feasible solutions for $J_k=J\smallsetminus \{\bar{x}_1,\dots,\bar{x}_k\}$ are 
\[
\{\bar{y}_1\},\dots, \{\bar{y}_{k+1}\}, \{\bar{x}_{k+1},\bar{y}_{k+2}\},\dots,\{\bar{x}_{k+1},\dots,\bar{x}_{n-1},\bar{y}_n\}, \{\bar{x}_{k+1},\dots,\bar{x}_n\}.
\]
Note that, for every $i\in\{1,\dots,n\}$, $\bar{\w}(\bar{x}_i)=\w(x_i)$, and  $\bar{\w}(\bar{y}_i)=\opt(I\smallsetminus\{x_1,\dots,x_{i-1}\})$. We claim that $\bar{x}_{k+1}$ is the next element chosen by the agent. Indeed, note first that $\bar{\w}(\bar{x}_{k+1})\leq \bar{\w}(\bar{y}_{k+2})$, as, otherwise, we would get $\w(x_{k+1}) > \opt(I\smallsetminus\{x_1,\dots,x_{k+1}\})$, contradicting the choice of $x_{k+1}$ by the agent performing on~$I$. As a consequence, 
$$
c_\beta(\{\bar{x}_{k+1},\bar{y}_{k+2}\})=\bar{\w}(\bar{x}_{k+1}) + \beta \,\bar{\w}(\bar{y}_{k+2}). 
$$
It follows from the above that, for every $j=1,\dots,k+1$, $c_\beta(\{\bar{y}_j\})\geq c_\beta(\{\bar{x}_{k+1},\bar{y}_{k+2}\})$, as the reverse inequality would contradict the choice of $x_{k+1}$ by the agent performing on~$I$. For the same reason, for every $\ell\in\{k+1,\dots,n-1\}$, and every $i\in\{k+1,\dots,\ell\}$, we have 
\[
c_\beta(\{\bar{x}_{k+1},\bar{y}_{k+2}\}) \leq \bar{\w}(\bar{x}_i)+\beta \Big (\sum_{j\in\{k+1,\dots,\ell\}\smallsetminus \{i\}} \bar{\w}(\bar{x}_j) +\bar{\w}(\bar{y}_{\ell+1})\Big)
\]
and
\[
c_\beta(\{\bar{x}_{k+1},\bar{y}_{k+2}\}) \leq \bar{\w}(\bar{y}_{\ell+1})+\beta \sum_{j\in\{k+1,\dots,\ell\}} \bar{\w}(\bar{x}_j).
\]
As a consequence, the present-biased agent performing on $J$ picks $\bar{x}_{k+1}$ at step~$k+1$, as claimed. The cost of the solution computed by the agent is $\sum_{i=1}^n\bar{\w}(\bar{x}_i)=\sum_{i=1}^n w(x_i)=\alpha\,\opt(I)$. On the other hand, by construction, $\opt(J)=\bar{\w}(\bar{y}_1)=\opt(I)$. The cost ratio of the solution computed by the agent for~$J$ is thus~$\alpha$, which completes the proof. 
\end{proof}

\section{Conclusion}
We demonstrated that, by focussing on present-biased agents solving tasks, specific detailed analysis can be carried on for each considered task, which enables to identify very different agent's behavior depending on the tasks (e.g., set cover vs. hitting set). Second, focussing on present-biased agents solving tasks enables to generalize the study to overestimation, and to maximization, providing a global picture of searching via present-biased agents. Yet, lots remain to be done for understanding the details of this picture. 

In particular, efforts could be made for studying other specific classical problems in the context of searching by a present-biased agent. This includes classical optimization problems like traveling salesman (TSP), metric TSP, maximum matching, feedback vertex set, etc. Such study may lead to a better understanding of the class of problems for which present-biased agents are efficient, and the class for which they act poorly. And, for problems for which present-biased agents are acting poorly, it may be of high interest to understand what kind of restrictions on the agent's strategy may help the agent finding better solutions. 

Another direction of research is further investigation of the influence of using approximation  algorithms by agents, as it is natural to assume that the agents are unable compute the cost exactly. We made some initial steps in this direction, but it seems that this area is almost unexplored. For example,
it can be noted that the upper bound for the cost ratio in Theorem~\ref{thm:min-betamoreone} can be rewritten under the assumption that the agent uses an $\alpha$-approximation algorithm. However, the bound gets blown-up by the factor $\alpha^s$, where $s$ is the size of the solution obtained by the agent (informally, we pay the factor $\alpha$ on each iteration). From the other side, the examples in Section~\ref{sec:hitting} show that this is not always so.    
Are there cases when this exponential blow-up unavoidable? The same question can be asked about maximization problems.

\end{document}